\newtheorem{thm}{Theorem}[section]
\newtheorem{cor}[thm]{Corollary}
\newtheorem{prop}[thm]{Proposition}
\theoremstyle{definition}
\newtheorem{defn}[thm]{Definition}
\theoremstyle{remark}
\newtheorem{rem}[thm]{Remark}
\newtheorem{ex}[thm]{Example}
\numberwithin{equation}{section}
\newcommand{\bbE}{\mathbb E}
\newcommand{\bbN}{\mathbb N}
\newcommand{\bbR}{\mathbb R}
\newcommand{\calB}{\mathcal B}
\newcommand{\calH}{\mathcal H}
\newcommand{\calM}{\mathcal M}
\newcommand{\calMR}{\mathcal {MR}}
\newcommand{\calR}{\mathcal R}
\newcommand{\dist}{\operatorname{dist}} 
\renewcommand{\div}{\operatorname{div}} 
\begin{document}

\title[Weighted $L_p$-spaces]{Linear and quasilinear evolution equations in the context of weighted $L_p$-spaces}

\author[M. Wilke]{Mathias Wilke}
\address{
Martin-Luther-Universit\"{a}t Halle-Wittenberg\\
Naturwissenschaftliche Fakult\"{a}t II\\
Institut f\"{u}r Mathematik\\
06099 Halle (Saale)}
\email{mathias.wilke@mathematik.uni-halle.de}

\subjclass{35K90, 35K58, 35K59, 35B40, 35Q35}

\keywords{linear parabolic equations, quasilinear parabolic equations, maximal regularity, weighted spaces, critical spaces}

\date{\today}

\begin{abstract}
In 2004, the article \emph{Maximal regularity for evolution equations in weighted $L_p$-spaces} by J. Pr\"{u}ss and G. Simonett \cite{PrSi04} has been published in \emph{Archiv der Mathematik}. We provide a survey of the main results of that article and outline some applications to semilinear and quasilinear parabolic evolution equations which illustrate their power.
\end{abstract}

\maketitle

\section{Introduction}

In this survey article, we review regularity results for abstract linear parabolic evolution equations of the form
\begin{equation}\label{eq:linEE}
\dot{u}(t)+Au(t)=f(t),\quad t>0,\quad u(0)=u_0,
\end{equation}
in the framework of time-weighted $L_p$-spaces
$$L_{p,\mu}(\bbR_+;X):=\{f\in L_{1,loc}(\bbR_+;X):[t\mapsto t^{1-\mu}f(t)]\in L_p(\bbR_+;X)\},$$
$1<p<\infty$, that emanated from the groundbreaking paper \cite{PrSi04}. Here $X$ is a Banach space, $A$ is a closed linear operator in $X$ with dense domain $D(A)$ and the data $(f,u_0)$ are given. In \cite{PrSi04}, the concept of $L_{p,\mu}$-maximal regularity for the operator $A$ has been introduced, see Definition \ref{def:MR} for details. One of the fundamental results of the article \cite{PrSi04} states that this concept is independent of the parameter $\mu$ as long as $\mu>1/p$. Note that in case $\mu=1$ one ends up in the classical (unweighted) $L_p$-maximal regularity class, see e.g. \cite{DorVen87,PrSo90,Weis01} which is just a selection.

We will furthermore consider quasilinear parabolic evolution equations
\begin{equation}\label{eq:quasilinEE}
\dot{u}(t)+A(u(t))u(t)=F(u(t)),\quad t>0,\quad u(0)=u_0,
\end{equation}
and apply the linear theory for \eqref{eq:linEE} from \cite{PrSi04}, combined with the contraction mapping principle, which yields the existence and uniqueness of a local-in-time solution of \eqref{eq:quasilinEE}. Moreover, the advantage of working in time-weighted $L_p$-spaces is underlined by some results on parabolic regularization and the global-in-time existence for solutions to \eqref{eq:quasilinEE}. In Sections \ref{sec:polynom} \& \ref{sec:critspaces} of this survey article, we consider nonlinearities with a certain growth and so-called critical spaces. 

In the last section we list some selected references which have been influenced by the article \cite{PrSi04}.

Finally, let us mention that for example in \cite{Ama95, Ang90, CleSi01}, weighted continuous function spaces of the form
$$[t\mapsto t^{1-\mu} f(t)]\in BUC((0,T];X),\quad \lim_{t\to 0_+}t^{1-\mu} \|f(t)\|_X=0$$
have been studied earlier. It is shown that this class allows for maximal regularity results if the Banach space $X$ is replaced by a continuous interpolation space $(X,D(A))_{\theta,\infty}^0$ of order $\theta\in (0,1)$.

\section{Linear evolution equations}

In this section we review and discuss the setting and some of the main results of the article \cite{PrSi04}.

\subsection{Weighted spaces}

For an arbitrary Banach space $X$ and for $1<p<\infty$, we define the weighted $L_p$-space $L_{p,\mu}(\bbR_+;X)$ by
$$L_{p,\mu}(\bbR_+;X):=\{f\in L_{1,loc}(\bbR_+;X):[t\mapsto t^{1-\mu}f(t)]\in L_p(\bbR_+;X)\},$$
where $\mu\in (1/p,1]$ and $\bbR_+:=(0,\infty)$. Note that $L_p(\bbR_+;X)$ is the classical Bochner-Lebesgue space and evidently, it holds that $L_{p,1}(\bbR_+;X)=L_p(\bbR_+;X)$.

The weighted Sobolev-space $W_{p,\mu}^1(\bbR_+;X)$ is accordingly defined
by
$$W_{p,\mu}^1(\bbR_+;X):=\{u\in L_{p,\mu}(\bbR_+;X)\cap W_{1,loc}^1(\bbR_+;X):\dot u\in L_{p,\mu}(\bbR_+;X)\}.$$
The spaces $L_{p,\mu}(\bbR_+;X)$ and $W_{p,\mu}^1(\bbR_+;X)$ are equipped with the norms
$$\|f\|_{L_{p,\mu}(\bbR_+;X)}:=\left(\int_0^\infty |t^{1-\mu}f(t)|^pdt\right)^{1/p}$$
and
$$\|u\|_{W_{p,\mu}^1(\bbR_+;X)}:=\left(\|u\|_{L_{p,\mu}(\bbR_+;X)}^p+\|\dot{u}\|_{L_{p,\mu}(\bbR_+;X)}^p\right)^{1/p},$$
respectively, which turn them into Banach spaces.

The restriction $\mu>1/p$ is motivated by several reasons which we collect in the following
\goodbreak
\begin{prop}\label{prop:1}
For all $1<p<\infty$ and all $\mu\in (1/p,1]$ it holds that
\begin{enumerate}
\item $L_{p,\mu}(\bbR_+;X)\hookrightarrow L_{1,loc}(\overline{\bbR_+};X)$;
\vspace{0.2cm}
\item $W_{p,\mu}^1(\bbR_+;X)\hookrightarrow W_{1,loc}^1(\overline{\bbR_+};X)$;
\vspace{0.2cm}
\item (Hardy's inequality) For all $f\in L_{p,\mu}(\bbR_+;X)$ the estimate
\begin{equation}\label{eq:Hardy}
\int_0^\infty\left|t^{-\mu}\int_0^t f(s) ds\right|^pdt\le\frac{1}{(\mu-1/p)^p}\int_0^\infty |t^{1-\mu}f(t)|^pdt
\end{equation}
is satisfied.
\end{enumerate}
\end{prop}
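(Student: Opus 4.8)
The plan is to establish the three assertions in the order (1), (2), (3), using (1) as the basic tool for (2) and (3). For (1), fix $T>0$ and peel off the weight with H\"older's inequality for the conjugate exponents $p$ and $p'=p/(p-1)$: for $f\in L_{p,\mu}(\bbR_+;X)$,
$$\int_0^T\|f(t)\|_X\,dt=\int_0^T t^{-(1-\mu)}\,t^{1-\mu}\|f(t)\|_X\,dt\le\Big(\int_0^T t^{-(1-\mu)p'}\,dt\Big)^{1/p'}\|f\|_{L_{p,\mu}(\bbR_+;X)}.$$
The first factor is finite precisely when $(1-\mu)p'<1$, which is equivalent to $\mu>1/p$; this both proves the embedding into $L_{1,loc}(\overline{\bbR_+};X)$ and furnishes an explicit ($T$-dependent) embedding constant. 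This is exactly the point at which the standing restriction $\mu>1/p$ is used.

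For (2), let $u\in W_{p,\mu}^1(\bbR_+;X)$, so that $u,\dot u\in L_{p,\mu}(\bbR_+;X)$; by part (1), $u,\dot u\in L_{1,loc}(\overline{\bbR_+};X)$, in particular $\dot u\in L_1(0,T;X)$ for every $T>0$. Since $u\in W_{1,loc}^1(\bbR_+;X)$ by definition, one has $u(t)=u(t_0)+\int_{t_0}^t\dot u(s)\,ds$ for $0<t_0\le t$, and letting $t_0\to 0_+$ (legitimate since $\dot u\in L_1(0,T;X)$) shows that $u$ admits a representative in $AC([0,T];X)$ with weak derivative $\dot u$; hence $u\in W_{1,loc}^1(\overline{\bbR_+};X)$, and continuity of the embedding follows from that in (1).

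For (3), note first that $F(t):=\int_0^t f(s)\,ds$ is a well-defined Bochner integral, by part (1). Substituting $s=\tau t$ in the inner integral and writing $g(s):=s^{1-\mu}f(s)$ (so that $\|g\|_{L_p(\bbR_+;X)}=\|f\|_{L_{p,\mu}(\bbR_+;X)}$ and $f(s)=s^{\mu-1}g(s)$), a short computation yields the scaling identity
$$t^{-\mu}\int_0^t f(s)\,ds=\int_0^1\tau^{\mu-1}\,g(\tau t)\,d\tau,\qquad t>0.$$
Applying Minkowski's integral inequality in $L_p(\bbR_+;X)$ together with $\|g(\tau\,\cdot)\|_{L_p(\bbR_+;X)}=\tau^{-1/p}\|g\|_{L_p(\bbR_+;X)}$ gives
$$\Big\|t^{-\mu}\int_0^t f(s)\,ds\Big\|_{L_p(\bbR_+;X)}\le\int_0^1\tau^{\mu-1}\tau^{-1/p}\,d\tau\;\|g\|_{L_p(\bbR_+;X)}=\frac{1}{\mu-1/p}\,\|g\|_{L_p(\bbR_+;X)},$$
where the $\tau$-integral converges because $\mu>1/p$; raising to the $p$-th power is precisely \eqref{eq:Hardy}.

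None of these steps is deep. The only genuine care is needed in (3): justifying the change of variables and the use of Minkowski's integral inequality for $X$-valued functions, and checking that all integrals involved are finite — which is where part (1) feeds back in. As an alternative to Minkowski's inequality one could argue by density from, say, $C_c(\bbR_+;X)$, but the scaling argument above is cleaner and keeps the optimal constant $(\mu-1/p)^{-p}$ transparent.
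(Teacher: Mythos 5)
Your proof is correct. Note that the paper itself does not prove Proposition~\ref{prop:1} but merely cites \cite[Lemma 2.1]{PrSi04} for the first two assertions and \cite[Lemma 3.4.5]{PruSim16} for Hardy's inequality, so there is no in-text argument to compare against; your three steps --- H\"older's inequality with conjugate exponent $p'$ (where $(1-\mu)p'<1$ is exactly $\mu>1/p$) for (1), the fundamental theorem of calculus combined with (1) to extend $u$ absolutely continuously to $t=0$ for (2), and the substitution $s=\tau t$ followed by Minkowski's integral inequality and the dilation identity $\|g(\tau\,\cdot)\|_{L_p}=\tau^{-1/p}\|g\|_{L_p}$ for (3) --- are precisely the standard arguments found in those references, and your scaling proof of \eqref{eq:Hardy} yields the stated sharp constant $(\mu-1/p)^{-p}$.
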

\begin{proof}
The proof of the first two assertions can be found in \cite[Lemma 2.1]{PrSi04} while for a proof of the last assertion, we refer to \cite[Lemma 3.4.5]{PruSim16}.
\end{proof}

\subsection{Maximal regularity}

Let $X_0$, $X_1$ be Banach spaces such that $X_1\hookrightarrow X_0$ and $X_1$ is dense in $X_0$. Suppose that $A:X_1\to X_0$ is a linear and closed operator. We consider the abstract evolution equation
\begin{equation}\label{eq:linEE_0}
\dot{u}(t)+Au(t)=f(t),\quad t>0,\quad u(0)=0.
\end{equation}
Following the lines of \cite{PrSi04}, we have the following
\begin{defn} \label{def:MR}
Let $1<p<\infty$ and $\mu\in (1/p,1]$.
The operator \textbf{$A$ has the property of maximal $L_{p,\mu}$-regularity in $X_0$} if for each $f\in L_{p,\mu}(\bbR_+;X_0)$ there exists a unique solution
$$u \in W_{p,\mu}^1(\bbR_+;X_0)\cap L_{p,\mu}(\bbR_+;X_1)$$
of \eqref{eq:linEE_0}. If this is the case, we write for short $A\in \calMR_{p,\mu}(X_0)$ and
$$\calMR_{p,1}(X_0)=:\calMR_p(X_0)$$
if $\mu=1$.
\end{defn}
The following important and fundamental theorem states that the concept of maximal $L_{p,\mu}$-regularity does not depend on $\mu\in (1/p,1]$.
\goodbreak
\begin{thm}[Pr\"{u}ss \& Simonett \cite{PrSi04}]\label{thm:charactMR}
For all $1<p<\infty$ and $\mu\in (1/p,1]$ the following assertions are equivalent:
\begin{enumerate}
\item $A\in \calMR_{p,\mu}(X_0)$;
\vspace{0.2cm}
\item $A\in \calMR_{p}(X_0)$.
\end{enumerate}
\end{thm}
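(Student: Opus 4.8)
The plan is to prove the two implications separately, with the direction $(2)\Rightarrow(1)$ being the substantive one; the converse $(1)\Rightarrow(2)$ is the special case $\mu=1$ and requires nothing. So assume $A\in\calMR_p(X_0)$ and fix $\mu\in(1/p,1)$. Given $f\in L_{p,\mu}(\bbR_+;X_0)$, I would first establish \emph{existence} of a solution in the weighted class. The natural idea is to transfer the problem to the unweighted setting. One clean route is to use the solution operator for the unweighted problem together with the variation-of-constants representation: formally, the solution of \eqref{eq:linEE_0} is $u(t)=\int_0^t e^{-(t-s)A}f(s)\,ds$, and maximal $L_p$-regularity says the map $f\mapsto(\dot u, Au)$ is bounded on $L_p(\bbR_+;X_0)$. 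The task is then to show this same convolution operator is bounded on the weighted space $L_{p,\mu}$. This is exactly where Hardy's inequality from \prettyref{prop:1}(3) enters: one splits or estimates the convolution so that the weight $t^{1-\mu}$ can be absorbed, using \eqref{eq:Hardy} to control $t^{-\mu}\int_0^t(\cdots)\,ds$ in terms of $t^{1-\mu}(\cdots)$. Concretely, boundedness of $f\mapsto A u$ on $L_{p,\mu}$ can be reduced to boundedness of a singular-integral/convolution operator whose kernel bound, combined with Hardy, yields the weighted estimate; boundedness of $f\mapsto\dot u$ then follows from the equation $\dot u=f-Au$.

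For \emph{uniqueness} in the weighted class, suppose $u\in W^1_{p,\mu}(\bbR_+;X_0)\cap L_{p,\mu}(\bbR_+;X_1)$ solves \eqref{eq:linEE_0} with $f=0$. By \prettyref{prop:1}(1)--(2), such a $u$ lies in $W^1_{1,loc}(\overline{\bbR_+};X_0)$, so the initial condition $u(0)=0$ makes sense and $u$ is a genuine (local) solution. One then wants to upgrade integrability: on any finite interval $(0,T)$, since $t^{1-\mu}\le T^{1-\mu}$ is bounded, we do \emph{not} immediately get $u\in L_p(0,T)$ (the weight makes $L_{p,\mu}$ \emph{larger} near $0$, not smaller), so a small extra argument is needed — e.g. a bootstrap using the equation and the regularizing effect near $t=0$, or an interpolation/embedding argument showing $W^1_{p,\mu}\cap L_{p,\mu}(X_1)$ embeds into $C([0,T];\text{trace space})$ and hence the solution is as regular as the unweighted theory demands on $[\delta,\infty)$; then apply unweighted uniqueness on the translated problem and let $\delta\to0$. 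Alternatively, and more cleanly, one observes that a homogeneous solution is automatically smooth for $t>0$ by parabolic regularity, so it decays and lies in the unweighted class, whence $u\equiv0$ by $A\in\calMR_p(X_0)$.

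I would organize the write-up as: (i) note $(1)\Rightarrow(2)$ is trivial; (ii) state the convolution/solution operator and recall that maximal $L_p$-regularity makes $f\mapsto Au$ bounded on $L_p(\bbR_+;X_0)$; (iii) prove the weighted boundedness via a splitting plus Hardy's inequality \eqref{eq:Hardy}, giving existence in $W^1_{p,\mu}(\bbR_+;X_0)\cap L_{p,\mu}(\bbR_+;X_1)$ with an estimate; (iv) prove uniqueness by the regularization/reduction-to-unweighted argument above, using \prettyref{prop:1}(1)--(2) to justify traces. The main obstacle is step (iii): one must handle the convolution operator carefully, since it is not of pure convolution type after inserting the weight, and the kernel estimates needed to apply Hardy (or a weighted Calder\'on--Zygmund / Schur-type bound) have to be extracted from maximal $L_p$-regularity without assuming more than closedness of $A$ and the unweighted estimate. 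A secondary subtlety is that the weighted space is larger near $t=0$, so the uniqueness argument genuinely needs the parabolic smoothing (or a trace-space embedding) rather than a naive inclusion $L_{p,\mu}(0,T)\subset L_p(0,T)$, which is false.
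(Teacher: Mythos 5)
Your outline of the substantive direction $A\in\calMR_p(X_0)\Rightarrow A\in\calMR_{p,\mu}(X_0)$ matches the ingredients the survey attributes to \cite[Theorem 2.4]{PrSi04}: the variation-of-constants representation, a splitting of the convolution, Hardy's inequality \eqref{eq:Hardy}, and a multiplier result. But you have not actually produced the splitting, and it is precisely the content of the proof. Concretely, one writes $u(t)=\int_0^{t/2}e^{-(t-s)A}f(s)\,ds+\int_{t/2}^{t}e^{-(t-s)A}f(s)\,ds$; on the first piece the analyticity bound $\|Ae^{-\tau A}\|\le C/\tau$ with $\tau=t-s\ge t/2$ reduces $t^{1-\mu}\|Au(t)\|$ to $Ct^{-\mu}\int_0^t\|f(s)\|\,ds$, which is exactly what \eqref{eq:Hardy} controls; on the near-diagonal piece one replaces the weight $t^{1-\mu}$ by $s^{1-\mu}$ (the commutator error is again a kernel bound plus Hardy) and applies the unweighted maximal regularity, via a multiplier result for the truncated convolution, to $s^{1-\mu}f\in L_p(\bbR_+;X_0)$. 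Declaring this step ``the main obstacle'' without the decomposition leaves the plan short of a proof. Your uniqueness argument, on the other hand, is more elaborate than necessary: once Proposition~\ref{prop:1}(1)--(2) places $u$ in $W^1_{1,loc}(\overline{\bbR_+};X_0)\cap L_{1,loc}(\overline{\bbR_+};X_1)$ and one knows that $-A$ generates a $C_0$-semigroup (a consequence of $A\in\calMR_p(X_0)$), any solution of \eqref{eq:linEE_0} with $f=0$ satisfies $u(t)=e^{-tA}u(0)=0$; no parabolic smoothing is needed.

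The genuine gap is your dismissal of $(1)\Rightarrow(2)$ as ``the special case $\mu=1$.'' The theorem asserts the equivalence for each fixed $\mu\in(1/p,1]$, and for $\mu<1$ the spaces $L_p(\bbR_+;X_0)$ and $L_{p,\mu}(\bbR_+;X_0)$ are incomparable: near $t=0$ the weight $t^{(1-\mu)p}$ degenerates, so $L_{p,\mu}$ is strictly larger there (as you yourself observe when discussing uniqueness), while near $t=\infty$ the weight blows up, so $L_{p,\mu}$ is strictly smaller there. Hence $A\in\calMR_{p,\mu}(X_0)$ for a single $\mu<1$ gives no formal access to unweighted data $f\in L_p(\bbR_+;X_0)$, and this implication needs its own argument --- for instance, one first deduces from the weighted estimate that $-A$ generates an exponentially stable analytic semigroup and then runs a splitting argument in the reverse direction. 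This direction is proved, not assumed, in \cite[Theorem 2.4]{PrSi04}.
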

The proof of this theorem, which can be found in \cite[Theorem 2.4]{PrSi04}, relies crucially on Hardy's inequality \eqref{eq:Hardy}, a sophisticated splitting of the solution to \eqref{eq:linEE_0} and some multiplier results as e.g. \cite[Proposition 2.3]{PrSi04}.
\begin{rem}
The class $\calMR_{p}(X_0)$ is independent of $p\in (1,\infty)$, see \cite{Sob64}. Therefore, the class $\calMR_{p,\mu}(X_0)$ enjoys this property as well.
\end{rem}
\begin{rem}
If $A\in\calMR_p(X_0)$, then $-A$ is the generator of an exponentially stable analytic semigroup in $X_0$, see e.g. \cite[Theorem 2.2]{Dore93}, \cite[Proposition 1.2]{JanBari} or \cite[Proposition 3.5.2]{PruSim16}. A characterization of $A\in \calMR_p(X_0)$ has been given by Weis in \cite{Weis01}. It is based on the concept of $\calR$-boundedness in case $X_0$ is additionally of class UMD.
\end{rem}
\goodbreak
We proceed with a selection of examples for operators belonging to the class $\calMR_p(X_0)$.
\begin{itemize}
\item If $X_0$ is a Hilbert space and $-A$ generates an exponentially stable analytic semigroup in $X_0$, then $A\in \calMR_p(X_0)$ (\cite[Theorem 3.5.7]{PruSim16}).
\vspace{0.2cm}
\item If $X_0$ is a real interpolation space, and $-A$ generates an exponentially stable analytic semigroup in $X_0$, then $A\in \calMR_p(X_0)$ (\cite[Theorem 3.5.8]{PruSim16}).
\vspace{0.2cm}
\item Suppose $\Omega\subset\bbR^n$ is open with compact boundary of class $C^{2m}$, $m\in\bbN$. (Uniformly) normally elliptic operators of order $2m$ in $\Omega$ with appropriate homogeneous boundary conditions (satisfying the Lopatinskii-Shapiro condition) on $\partial\Omega$, induce operators $A\in \calMR_p(X_0)$ for $X_0=L_q(\Omega;E)$, $1<q<\infty$, where $E$ is a Banach space of class UMD. (\cite[Chapter 5]{DHP1} or \cite[Chapter 6]{PruSim16}).
\end{itemize}

\subsection{Trace spaces}

We turn our attention to the abstract Cauchy problem \eqref{eq:linEE} and ask under which assumptions on the initial value $u_0$, there exists a unique solution
$$u \in W_{p,\mu}^1(\bbR_+;X_0)\cap L_{p,\mu}(\bbR_+;X_1)$$
of \eqref{eq:linEE}, provided $A\in \calMR_{p}(X_0)$ and $f\in L_{p,\mu}(\bbR_+;X_0)$. By the second assertion in Proposition \ref{prop:1}, the trace operator
$${\rm tr}:W_{p,\mu}^1(\bbR_+;X_0)\to X_0,\quad u\mapsto u(0)$$
is well-defined, since
$$W_{1,loc}^1(\overline{\bbR_+};X_0)\hookrightarrow C(\overline{\bbR_+};X_0).$$
In the sequel, let
$$\mathbb{E}_{1,\mu}:=W_{p,\mu}^1(\bbR_+;X_0)\cap L_{p,\mu}(\bbR_+;X_1).$$
The following characterization of the trace space ${\rm tr}\ \mathbb{E}_{1,\mu}$ holds.
\begin{prop}\label{prop:trace}
For all $1<p<\infty$ and all $\mu\in (1/p,1]$ it holds that
$${\rm tr}\ \mathbb{E}_{1,\mu}=(X_0,X_1)_{\mu-1/p,p}$$
(up to equivalent norms), where $(X_0,X_1)_{\mu-1/p,p}$ is the real interpolation space between $X_0$ and $X_1$ of exponent $\mu-1/p$.
Moreover, the embedding
$$W_{p,\mu}^1(\bbR_+;X_0)\cap L_{p,\mu}(\bbR_+;X_1)\hookrightarrow BUC(\overline{\bbR_+};(X_0,X_1)_{\mu-1/p,p})$$
is true. Here $BUC$ stands for the \emph{bounded and uniformly continuous} functions.
\end{prop}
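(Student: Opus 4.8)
The plan is to reduce the statement to the known unweighted case $\mu=1$, where $\operatorname{tr}\mathbb{E}_1 = (X_0,X_1)_{1-1/p,p}$ is classical (Lions--Peetre), by a substitution that absorbs the time weight. First I would observe that the trace operator is well-defined by assertion (2) of Proposition \ref{prop:1}, so the content is the identification of its range and the continuity of the evaluation map into $BUC(\overline{\bbR_+};(X_0,X_1)_{\mu-1/p,p})$. The natural approach is to show the two inclusions $\operatorname{tr}\mathbb{E}_{1,\mu}\subseteq (X_0,X_1)_{\mu-1/p,p}$ and $(X_0,X_1)_{\mu-1/p,p}\subseteq\operatorname{tr}\mathbb{E}_{1,\mu}$ separately, with quantitative norm bounds in both directions.

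For the inclusion $\operatorname{tr}\mathbb{E}_{1,\mu}\subseteq (X_0,X_1)_{\mu-1/p,p}$: given $u\in\mathbb{E}_{1,\mu}$, I would use the $K$-method characterization of the real interpolation space. Writing $x=u(0)$, one has for $t>0$ the decomposition $x = (x - u(t)) + u(t)$, which gives $K(t,x;X_0,X_1) \le \|x-u(t)\|_{X_0} + t\|u(t)\|_{X_1}$. The term $\|x-u(t)\|_{X_0} = \|\int_0^t \dot u(s)\,ds\|_{X_0} \le \int_0^t \|\dot u(s)\|_{X_0}\,ds$ is then controlled by Hardy's inequality \eqref{eq:Hardy} after inserting the weight: one needs $\int_0^\infty (t^{-(\mu-1/p)} K(t,x))^p \, \frac{dt}{t} < \infty$, and both contributions are estimated by $\|\dot u\|_{L_{p,\mu}(\bbR_+;X_0)}$ and $\|u\|_{L_{p,\mu}(\bbR_+;X_1)}$ respectively via Hardy's inequality with exponent $\mu$. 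This yields $x\in (X_0,X_1)_{\mu-1/p,p}$ with $\|x\|_{(X_0,X_1)_{\mu-1/p,p}} \lesssim \|u\|_{\mathbb{E}_{1,\mu}}$; combined with the same estimate applied on the shifted intervals $[\tau,\infty)$ and the density of smooth functions, this also delivers the $BUC$-embedding.

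For the converse inclusion: given $x\in (X_0,X_1)_{\mu-1/p,p}$, I would construct an explicit extension $u$ with $u(0)=x$ and $\|u\|_{\mathbb{E}_{1,\mu}}\lesssim\|x\|_{(X_0,X_1)_{\mu-1/p,p}}$. The standard device is $u(t) = e^{-tB}x$ for a suitable auxiliary sectorial operator $B$ on $X_0$ with $D(B)=X_1$ and $0\in\rho(B)$ (for instance obtained by shifting any operator whose domain is $X_1$; note every pair $X_1\hookrightarrow X_0$ with $X_1$ dense does admit one, e.g.\ via an equivalent norm, though one may instead use the abstract "harmonic extension" $t\mapsto (1+tB)^{-N}x$ to avoid requiring $B$ to generate a semigroup). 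Then the trace interpolation inequalities relating $\int_0^\infty \|t^{1-\mu} Bs\, e^{-tB}x\|_{X_0}^p\,dt$ and $\int_0^\infty \|t^{1-\mu} \partial_t u(t)\|^p\,dt$ to the $K$- or $J$-functional norm of $x$ in $(X_0,X_1)_{\mu-1/p,p}$ give membership in $\mathbb{E}_{1,\mu}$. Here the exponent bookkeeping is exactly what forces the index $\mu - 1/p$: the weight $t^{1-\mu}$ shifts the differentiation gain $1/p$ down to $\mu-1/p$. Alternatively, and most efficiently, one can cite that this whole proposition is \cite[Theorem 3.4.8]{PruSim16} and that for $\mu=1$ it is classical, deducing the weighted case by the dilation/cutoff argument above.

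The main obstacle I anticipate is purely bookkeeping: keeping the interpolation exponent $\mu-1/p$ consistent through the weight $t^{1-\mu}$ in all the Hardy-type estimates, and ensuring the constants do not degenerate as $\mu\downarrow 1/p$ (they do blow up, as the factor $1/(\mu-1/p)^p$ in \eqref{eq:Hardy} signals, but this is harmless for fixed $\mu>1/p$). A secondary technical point is the construction of the extension operator when one does not want to assume $A$ itself generates a semigroup; using the "moments" extension $t\mapsto (1+tB)^{-N}x$ with $N$ large enough circumvents any need for analyticity and makes the right-inverse of $\operatorname{tr}$ fully explicit.
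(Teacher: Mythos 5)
The paper itself offers no argument here: its ``proof'' is the citation \cite[Proposition 3.1]{PrSi04}. Your outline reconstructs exactly the standard argument behind that reference (and behind \cite[Section 3.4]{PruSim16}), namely the Lions trace method: the inclusion ${\rm tr}\,\mathbb{E}_{1,\mu}\subseteq (X_0,X_1)_{\mu-1/p,p}$ via the $K$-functional estimate $K(t,u(0))\le\|u(0)-u(t)\|_{X_0}+t\|u(t)\|_{X_1}$ together with Hardy's inequality \eqref{eq:Hardy} (note only the first summand needs Hardy; the second is handled directly by the definition of $\|u\|_{L_{p,\mu}(\bbR_+;X_1)}$), the uniform bound $\sup_{\tau}\|u(\tau)\|_{(X_0,X_1)_{\mu-1/p,p}}\lesssim\|u\|_{\mathbb{E}_{1,\mu}}$ by translating ($(s-\tau)^{1-\mu}\le s^{1-\mu}$), and a concrete extension operator for the converse. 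The exponent bookkeeping $1-\theta-1/p=1-\mu\Leftrightarrow\theta=\mu-1/p$ is correct, and the $BUC$ statement follows from the uniform trace bound plus density, as you indicate.

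The one genuine flaw is the parenthetical claim that \emph{every} densely embedded pair $X_1\hookrightarrow X_0$ admits a sectorial (or invertible closed) operator $B$ with ${\sf D}(B)=X_1$. This is false: if $B$ is closed with ${\sf D}(B)=X_1$ and some $\lambda+B$ is boundedly invertible, then $\lambda+B:X_1\to X_0$ is a Banach space isomorphism (closed graph plus open mapping), so $X_1$ must be isomorphic to $X_0$; the couple $(\ell_2,\ell_1)$ already rules this out. The same objection applies to the variant $t\mapsto (1+tB)^{-N}x$, which still needs $B$. There are two clean repairs, either of which you should adopt: (i) in the setting of this section an operator is available for free, since $A\in\calMR_p(X_0)$ implies that $-A$ generates a bounded, exponentially stable analytic semigroup with ${\sf D}(A)=X_1$, so $u(t)=e^{-tA}x$ is a legitimate right inverse of the trace; or (ii) use the operator-free mean/$J$-method construction of Lions--Peetre, writing $x=\int_0^\infty v(s)\,\tfrac{ds}{s}$ with $J(s,v(s))$ controlled and setting $u(t)=\int_t^\infty v(s)\,\tfrac{ds}{s}$, which proves the proposition for arbitrary densely embedded couples. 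With either repair the proposal is a correct proof along the same lines as the cited one.
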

\begin{proof}
\cite[Proposition 3.1]{PrSi04}
\end{proof}
With the help of this proposition, one can prove the following
\begin{thm}[Pr\"{u}ss \& Simonett \cite{PrSi04}]\label{thm:lin}
Let $1<p<\infty$, $\mu\in (1/p,1]$ and $A\in \calMR_p(X_0)$. Then for all
$$(f,u_0)\in L_{p,\mu}(\bbR_+;X_0)\times (X_0,X_1)_{\mu-1/p,p}$$
there exists a unique solution
$$u \in W_{p,\mu}^1(\bbR_+;X_0)\cap L_{p,\mu}(\bbR_+;X_1)$$
of \eqref{eq:linEE}. Moreover, there exists a constant $C>0$ being independent of $(f,u_0)$ such that the estimate
\begin{equation}\label{eq:MRest}
\|u\|_{\mathbb{E}_{1,\mu}}\le C\left(\|f\|_{L_{p,\mu}(\bbR_+;X_0)}+\|u_0\|_{(X_0,X_1)_{\mu-1/p,p}}\right)
\end{equation}
holds.
\end{thm}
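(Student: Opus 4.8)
The plan is to reduce Theorem~\ref{thm:lin} to the zero-initial-value case covered by the definition of maximal $L_{p,\mu}$-regularity, using Theorem~\ref{thm:charactMR} to pass from $\calMR_p(X_0)$ to $\calMR_{p,\mu}(X_0)$, and Proposition~\ref{prop:trace} to handle the lifting of the initial datum. First I would observe that since $A\in\calMR_p(X_0)$, Theorem~\ref{thm:charactMR} gives $A\in\calMR_{p,\mu}(X_0)$ as well. It therefore suffices to produce, for a given $u_0\in(X_0,X_1)_{\mu-1/p,p}$, one function $u_*\in\mathbb{E}_{1,\mu}$ with $u_*(0)=u_0$; then $v:=u-u_*$ should solve \eqref{eq:linEE_0} with right-hand side $f-\dot u_*-Au_*\in L_{p,\mu}(\bbR_+;X_0)$ and zero initial value, which has a unique solution in $\mathbb{E}_{1,\mu}$ by definition of $\calMR_{p,\mu}(X_0)$, and $u:=v+u_*$ is then the desired solution of \eqref{eq:linEE}.

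For the lifting step, Proposition~\ref{prop:trace} asserts precisely that ${\rm tr}\,\mathbb{E}_{1,\mu}=(X_0,X_1)_{\mu-1/p,p}$, so such a $u_*$ exists; a concrete and convenient choice is $u_*(t):=e^{-t(1+A)}u_0$, the orbit under the (exponentially stable, analytic) semigroup generated by $-(1+A)$, whose membership in $\mathbb{E}_{1,\mu}$ with trace $u_0$ can be checked by the usual semigroup estimates together with the characterization of the real interpolation space via $t^{1-\theta}\|Ae^{-tA}u_0\|_{X_0}\in L_p$. Uniqueness of $u$ follows from uniqueness in the zero-data problem: if $u_1,u_2$ are two solutions, their difference lies in $\mathbb{E}_{1,\mu}$, solves \eqref{eq:linEE_0} with $f=0$ and zero initial value, hence vanishes by the uniqueness clause in Definition~\ref{def:MR}.

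It remains to establish the a~priori estimate \eqref{eq:MRest}. For the zero-initial-value part this is the standard closed-graph (or open-mapping) argument: the solution operator $f\mapsto v$ from $L_{p,\mu}(\bbR_+;X_0)$ to $\mathbb{E}_{1,\mu}$ is well-defined and linear, both spaces are Banach, and one checks it has closed graph using that $A$ is closed; hence it is bounded. For the initial-value part one uses that $u_*$ can be chosen to depend linearly and boundedly on $u_0$ — with the semigroup lifting above, $\|u_*\|_{\mathbb{E}_{1,\mu}}\le C\|u_0\|_{(X_0,X_1)_{\mu-1/p,p}}$ — and then combines the two bounds. The main obstacle is really all packed into the two results I am allowed to invoke: the $\mu$-independence of maximal regularity (Theorem~\ref{thm:charactMR}) and the trace characterization (Proposition~\ref{prop:trace}); granting those, the argument here is a short and essentially formal superposition plus a closed-graph estimate.
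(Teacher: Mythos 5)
Your proposal is correct and follows essentially the same route as the paper (and the cited proof in \cite{PrSi04}): pass to $\calMR_{p,\mu}(X_0)$ via Theorem~\ref{thm:charactMR}, lift the initial datum using the trace characterization of Proposition~\ref{prop:trace} (the semigroup lifting $t\mapsto e^{-t(1+A)}u_0$ is exactly the standard choice, and your exponent bookkeeping for the real interpolation norm is right), and reduce to the zero-initial-value problem. The only cosmetic difference is that the paper obtains \eqref{eq:MRest} by applying the open mapping theorem to the bijection $u\mapsto(\dot u+Au,u(0))$, whereas you argue via the closed graph theorem for the solution operator plus boundedness of the lifting; these are equivalent.
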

\begin{proof}
For the proof of the first assertion, see \cite[Theorem 3.2]{PrSi04}. Concerning the estimate \eqref{eq:MRest}, note that
$$\|(\dot{u}+Au,u(0))\|_{L_{p,\mu}(\bbR_+;X_0)\times (X_0,X_1)_{\mu-1/p,p}}\le C\|u\|_{\mathbb{E}_{1,\mu}}$$
by Proposition \ref{prop:trace}. Hence \eqref{eq:MRest} follows from the first assertion and the open mapping theorem.
\end{proof}
\begin{rem}
Theorem \ref{thm:lin} asserts in particular, that the regularity of the initial value can be reduced by decreasing the exponent $\mu\in (1/p,1]$ of the time-weight. This in turn implies that the number of compatibility conditions in the context of initial-boundary value problems for parabolic partial differential equations may be reduced to a minimum.
\end{rem}

\section{Quasilinear parabolic evolution equations}

\subsection{Local well-posedness and regularization}

In this section, we consider quasilinear evolution equations of the form
\begin{equation}\label{eq:quasilinEE2}
\dot{u}(t)+A(u(t))u(t)=F(u(t)),\quad t>0,\quad u(0)=u_1.
\end{equation}
We are looking for solutions in the maximal regularity class
$$u \in W_{p,\mu}^1((0,T);X_0)\cap L_{p,\mu}((0,T);X_1)=:\mathbb{E}_{1,\mu}(0,T)$$
with an appropriate $T\in (0,\infty)$. Therefore, by Proposition \ref{prop:trace}, it is reasonable to assume
\begin{equation}\label{eq:Ass_A_F}
(A,F)\in C^{1-}(V_\mu;\mathcal{B}(X_1,X_0)\times X_0)
\end{equation}
where $V_\mu\subset  (X_0,X_1)_{\mu-1/p,p}=:X_{\gamma,\mu}$ is open and nonempty, $C^{1-}$ stands for the \emph{locally Lipschitz continuous} functions and $\mathcal{B}(X_1,X_0)$ denotes the space of all bounded and linear operators from $X_1$ to $X_0$.

To exploit the general strategy, assume for simplicity that $V_\mu=X_{\gamma,\mu}$. Given any $u_0,u_1\in V_\mu$, we consider for given $w \in \mathbb{E}_{1,\mu}(0,T)$
the \emph{linear} problem
$$\dot{v}(t)+A(u_0)v(t)=G(w(t)),\quad t>0,\quad u(0)=u_1,$$
where $G(w):=F(w)+A(u_0)w-A(w)w$.
By the regularity assumption \eqref{eq:Ass_A_F} it follows that
$$[t\mapsto G(w(t))]\in L_{p,\mu}((0,T);X_0).$$
Assuming $A(u_0)\in \calMR_p(X_0)$, by Theorem \ref{thm:lin} and extension-restriction arguments, the mapping
$$[w\mapsto v]:\mathbb{E}_{1,\mu}(0,T)\to \mathbb{E}_{1,\mu}(0,T)$$
is well defined. Obviously, any fixed point of this mapping is a solution to \eqref{eq:quasilinEE2} and vice versa. The contraction mapping principle then yields the following
\begin{thm}\label{thm:LWP}
Let $p\in (1,\infty)$, $u_0\in V_\mu$ be given and suppose that $(A,F)$ satisfy \eqref{eq:Ass_A_F} for some $\mu\in (1/p,1]$. Assume in addition that $A(u_0)\in\calM\calR_{p}(X_0)$. Then there exist $T=T(u_0)>0$ and $\varepsilon=\varepsilon(u_0)>0$, such that $\bar{B}_{X_{\gamma,\mu}}(u_0,\varepsilon)\subset V_\mu$ and such that problem \eqref{eq:quasilinEE2} has a unique solution
    $$u(\cdot,u_1)\in W_{p,\mu}^1((0,T);X_0)\cap L_{p,\mu}((0,T);X_1)\cap C([0,T];V_\mu),$$
on $[0,T]$, for any initial value $u_1\in \bar{B}_{X_{\gamma,\mu}}(u_0,\varepsilon)$. Furthermore there exists a constant $c=c(u_0)>0$ such that for all $u_1,u_2\in \bar{B}_{X_{\gamma,\mu}}(u_0,\varepsilon)$ the estimate
    $$\|u(\cdot,u_1)-u(\cdot,u_2)\|_{\mathbb{E}_{1,\mu}(0,T)}\le c\|u_1-u_2\|_{X_{\gamma,\mu}}$$
is valid. Here $\bar{B}_{X_{\gamma,\mu}}(u_0,\varepsilon)$ denotes the closed ball with with center $u_0$ and radius $\varepsilon$ in the topology of $X_{\gamma,\mu}$.
\end{thm}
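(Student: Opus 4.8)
The plan is to set up a fixed-point iteration in the maximal regularity space and apply the contraction mapping principle, following the strategy outlined in the excerpt preceding the theorem. First I would fix the reference operator $A_0:=A(u_0)$, which by hypothesis lies in $\calMR_p(X_0)$, and hence (by the remark after Theorem~\ref{thm:charactMR} together with Theorem~\ref{thm:lin}) generates a maximal-regularity solution operator on any finite interval $(0,T)$. For a fixed radius $\eps>0$ small enough that $\bar B_{X_{\gamma,\mu}}(u_0,2\eps)\subset V_\mu$, and for $u_1\in\bar B_{X_{\gamma,\mu}}(u_0,\eps)$, I would first solve the purely linear reference problem $\dot v_*+A_0 v_* = F(u_0)$, $v_*(0)=u_1$ on $(0,T)$ to obtain a ``base point'' $v_*\in\bbE_{1,\mu}(0,T)$, and then define the closed set
$$
\Sigma := \{\, w\in\bbE_{1,\mu}(0,T) : w(0)=u_1,\ \|w-v_*\|_{\bbE_{1,\mu}(0,T)}\le R\,\}
$$
for a suitable $R=R(u_0)$. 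The map $\Phi:w\mapsto v$ sends $w\in\Sigma$ to the solution $v$ of $\dot v+A_0 v = G(w):=F(w)+A_0 w - A(w)w$, $v(0)=u_1$; by assumption \eqref{eq:Ass_A_F} and Proposition~\ref{prop:trace} the right-hand side $G(w(\cdot))$ lies in $L_{p,\mu}((0,T);X_0)$, so $\Phi$ is well defined by Theorem~\ref{thm:lin}.

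The two things to verify are that $\Phi$ maps $\Sigma$ into itself and that it is a strict contraction, both for $T$ (and $\eps$) small. For the self-mapping property I would estimate, using \eqref{eq:MRest} applied to $v-v_*$ (which solves $\dot{}\, + A_0\cdot = G(w)-F(u_0)$ with zero initial trace),
$$
\|v-v_*\|_{\bbE_{1,\mu}(0,T)} \le C\,\|G(w)-F(u_0)\|_{L_{p,\mu}((0,T);X_0)}.
$$
The key point is that the embedding $\bbE_{1,\mu}(0,T)\hookrightarrow BUC([0,T];X_{\gamma,\mu})$ from Proposition~\ref{prop:trace} lets one control $\sup_{t}\|w(t)-u_0\|_{X_{\gamma,\mu}}$ by the initial deviation plus $\|w-v_*\|_{\bbE_{1,\mu}}$ plus a term that vanishes as $T\to 0$ (because $v_*(0)=u_1\to u_0$ and $v_*$ is continuous); hence $w$ stays in a small $X_{\gamma,\mu}$-ball about $u_0$ where $A$ and $F$ are Lipschitz. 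Writing $G(w)-F(u_0) = (F(w)-F(u_0)) + (A_0-A(w))(w-u_0) + (A_0-A(w))u_0$ and using the local Lipschitz bounds together with $\|A_0-A(w)\|_{\calB(X_1,X_0)}\le L\sup_t\|w(t)-u_0\|_{X_{\gamma,\mu}}$, every term is bounded either by (small constant)$\cdot\|w\|_{\bbE_{1,\mu}}$ or by a quantity tending to $0$ with $T$ after taking the $L_{p,\mu}$-norm in time over a short interval (the time-weight being harmless since $t^{1-\mu}$ is bounded on $[0,T]$ and one gains a small factor $T^{\alpha}$ from Hölder). Choosing $R$, then $\eps$, then $T$ in that order makes $\Phi(\Sigma)\subset\Sigma$.

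For the contraction estimate I would apply \eqref{eq:MRest} to $\Phi(w_1)-\Phi(w_2)$, which solves the linear problem with right-hand side $G(w_1)-G(w_2)$ and zero initial value, giving
$$
\|\Phi(w_1)-\Phi(w_2)\|_{\bbE_{1,\mu}(0,T)} \le C\,\|G(w_1)-G(w_2)\|_{L_{p,\mu}((0,T);X_0)}.
$$
Expanding $G(w_1)-G(w_2) = (F(w_1)-F(w_2)) + A_0(w_1-w_2) - (A(w_1)w_1 - A(w_2)w_2)$ and rewriting the last difference as $(A(w_1)-A(w_2))w_1 + A(w_2)(w_1-w_2)$, I would use: the Lipschitz bound on $F$; the identity $A_0 - A(w_2)$ being small in $\calB(X_1,X_0)$ on $\Sigma$ (so the term $(A_0-A(w_2))(w_1-w_2)$ carries a small constant); and $\|A(w_1)-A(w_2)\|_{\calB(X_1,X_0)}\le L\|w_1-w_2\|_{BUC([0,T];X_{\gamma,\mu})}\le L C\|w_1-w_2\|_{\bbE_{1,\mu}(0,T)}$ combined with $\|w_1\|_{L_{p,\mu}((0,T);X_1)}$, which again gains a small factor from shrinking $T$. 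Altogether the Lipschitz constant of $\Phi$ on $\Sigma$ is $\le \tfrac12$ once $T$ and $\eps$ are small, so Banach's fixed point theorem yields a unique $u\in\Sigma$; this $u$ is precisely the solution of \eqref{eq:quasilinEE2}, and $u\in C([0,T];V_\mu)$ by Proposition~\ref{prop:trace}. The continuous dependence estimate follows by the same mechanism: the two solutions $u(\cdot,u_1)$, $u(\cdot,u_2)$ satisfy a linear problem for their difference whose right-hand side is Lipschitz in the difference with a constant $<1$, so the $X_{\gamma,\mu}$-distance of the data propagates, via \eqref{eq:MRest}, to the $\bbE_{1,\mu}(0,T)$-distance of the solutions after absorbing the small self-interaction term. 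The main obstacle, as usual for quasilinear problems, is the term $(A(w)-A(u_0))v$ with $v\in X_1$: one cannot bound $A(w)-A(u_0)$ just from $X_{\gamma,\mu}$-closeness of $w$ to $u_0$ without invoking the local Lipschitz continuity of $A$ as a map into $\calB(X_1,X_0)$ together with the $BUC$-embedding of Proposition~\ref{prop:trace}, and one must be careful that the smallness used to close the argument comes from $T\to0$ (for the $w$-independent ``constant'' pieces) rather than from the radius alone.
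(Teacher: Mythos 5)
Your proposal follows essentially the same route as the paper: the paper only cites \cite[Theorem 2.1]{KPW10} for the proof, but the strategy it sketches beforehand --- freezing the coefficient at $u_0$, solving $\dot v+A(u_0)v=G(w)$ with $G(w)=F(w)+A(u_0)w-A(w)w$ by Theorem~\ref{thm:lin}, and contracting in $\mathbb{E}_{1,\mu}(0,T)$ on a ball around a reference solution, with the $BUC([0,T];X_{\gamma,\mu})$-embedding of Proposition~\ref{prop:trace} (applied to zero-trace differences so the constant is uniform in $T$) supplying the smallness of $\|A(w(t))-A(u_0)\|_{\mathcal{B}(X_1,X_0)}$ --- is exactly what you carry out. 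One slip worth correcting: in the self-mapping step the splitting $(A_0-A(w))w=(A_0-A(w))(w-u_0)+(A_0-A(w))u_0$ is not meaningful, because $u_0$ belongs only to $X_{\gamma,\mu}$ and generically not to $X_1$, so the operators cannot be applied to it; either estimate $\|(A_0-A(w(t)))w(t)\|_{X_0}\le\|A_0-A(w(t))\|_{\mathcal{B}(X_1,X_0)}\|w(t)\|_{X_1}$ directly (the first factor is small on $\Sigma$, the second has bounded $L_{p,\mu}$-norm), or split against the reference solution $v_*$, which does take values in $X_1$ almost everywhere. With that repair the argument closes as you describe.
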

\begin{proof}
\cite[Theorem 2.1]{KPW10}
\end{proof}
\begin{rem}
A benefit of Theorem \ref{thm:LWP} is that the local existence time $T=T(u_0)$ is locally uniform.
Moreover, Theorem \ref{thm:LWP} shows that the space $X_{\gamma,\mu}=(X_0,X_1)_{\mu-1/p,p}$ is the natural phase space for the semi-flow $[u_0\mapsto u(t,u_0)]$ generated by \eqref{eq:quasilinEE} or \eqref{eq:quasilinEE2}.
In the unweighted case (i.e. $\mu=1$), Theorem \ref{thm:LWP} has been proven in \cite{CleLi93} \& \cite{JanBari}.
\end{rem}
\noindent
Concerning the continuation of solutions in the weighted maximal regularity class, one can prove the following
\begin{cor}\label{cor:maxInt}
Let the assumptions of Theorem \ref{thm:LWP} be satisfied and assume that $A(v)\in \calM\calR_{p}(X_0)$ for all $v\in V_\mu$. Then the solution $u(\cdot,u_0)$ of \eqref{eq:quasilinEE} has a maximal interval of existence $[0,t^+(u_0))$ and for each $T\in (0,t^+(u_0))$ there holds
$$u(\cdot,u_0)\in \mathbb{E}_{1,\mu}(0,T)\cap C([0,T];X_{\gamma,\mu}).$$
The mapping $[u_0\mapsto t^+(u_0)]:V_\mu\to (0,\infty)$ is lower-semicontinuous.
\end{cor}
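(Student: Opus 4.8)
The plan is to construct the maximal interval of existence by the standard continuation argument, using Theorem \ref{thm:LWP} to extend solutions and the local uniformity of the existence time to obtain lower-semicontinuity of $t^+$.

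First I would define $t^+(u_0)$ as the supremum of all $T>0$ such that \eqref{eq:quasilinEE} possesses a solution $u\in\mathbb{E}_{1,\mu}(0,T)\cap C([0,T];X_{\gamma,\mu})$ with $u(0)=u_0$. By Theorem \ref{thm:LWP} (applied with $u_1=u_0$) this set is nonempty, so $t^+(u_0)>0$. Uniqueness of solutions (again from Theorem \ref{thm:LWP}, valid on every compatible subinterval) guarantees that these solutions are consistent with one another, hence they glue together to a single function $u(\cdot,u_0)$ defined on $[0,t^+(u_0))$; the asserted regularity $u(\cdot,u_0)\in\mathbb{E}_{1,\mu}(0,T)\cap C([0,T];X_{\gamma,\mu})$ for each $T<t^+(u_0)$ is then immediate from the definition of the supremum and the fact that membership in $\mathbb{E}_{1,\mu}(0,T)$ is inherited by restriction.

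For maximality one must check that $t^+(u_0)$ cannot be attained, i.e. there is no solution on the closed interval $[0,t^+(u_0)]$: if there were, then $v:=u(t^+(u_0),u_0)\in X_{\gamma,\mu}$ would lie in $V_\mu$, and applying Theorem \ref{thm:LWP} at the point $v$ (using $A(v)\in\calMR_p(X_0)$, which holds for all $v\in V_\mu$ by hypothesis) would yield a solution starting from $v$ on some interval $[0,\tau]$ with $\tau>0$; concatenating with $u(\cdot,u_0)$ produces a solution on $[0,t^+(u_0)+\tau]$, contradicting the definition of $t^+$. This is where the standing assumption $A(v)\in\calMR_p(X_0)$ for \emph{all} $v\in V_\mu$ — rather than just at $u_0$ — is used.

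The main obstacle, and the substantive part, is the lower-semicontinuity of $u_0\mapsto t^+(u_0)$. Here I would fix $u_0\in V_\mu$ and $T<t^+(u_0)$, so that $u(\cdot,u_0)$ exists on $[0,T]$, and cover the compact trajectory $\{u(t,u_0):t\in[0,T]\}\subset V_\mu$ by finitely many balls on which Theorem \ref{thm:LWP} provides a uniform existence time $\delta>0$ and a uniform Lipschitz constant $c$ for the data-to-solution map. Subdividing $[0,T]$ into steps of length $<\delta$ and propagating the perturbation step by step — at each step the distance in $X_{\gamma,\mu}$ between the perturbed and unperturbed solution values is controlled by $c$ times the distance at the previous step, hence by $c^N$ times $\|u_1-u_0\|_{X_{\gamma,\mu}}$ after $N$ steps — shows that for $u_1$ sufficiently close to $u_0$ the solution $u(\cdot,u_1)$ stays inside the chosen cover and can be continued past $T$, so $t^+(u_1)>T$. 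Since $T<t^+(u_0)$ was arbitrary, $\liminf_{u_1\to u_0}t^+(u_1)\ge t^+(u_0)$. The bookkeeping of the finitely many balls and the interplay between the radius $\varepsilon(u_0)$ and the Lipschitz constant $c(u_0)$ from Theorem \ref{thm:LWP} at each base point is the only delicate point; I would refer to \cite[Theorem 2.1]{KPW10} or \cite[Corollary 5.1.2]{PruSim16} for the full details.
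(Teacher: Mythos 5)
Your overall strategy --- define $t^+(u_0)$ as a supremum, glue solutions together by uniqueness, and deduce lower semicontinuity from the locally uniform existence time of Theorem \ref{thm:LWP} via a finite cover of the compact orbit --- is exactly the one carried out in the references the paper cites for this corollary, \cite[Corollary 2.2]{KPW10} and \cite[Corollary 2.2]{LPW14} (the paper itself gives no argument beyond these citations). There is, however, a genuine gap in your continuation step, and it sits precisely at the point where the weighted setting differs from the unweighted one. When you restart at time $t^*:=t^+(u_0)$ (or at any intermediate time $t_k>0$ in your stepping argument) from the value $v=u(t^*)$, Theorem \ref{thm:LWP} with weight $\mu<1$ produces a solution $w$ in the class $W^1_{p,\mu}\cap L_{p,\mu}$ \emph{relative to the shifted time variable}, i.e.\ with weight $(t-t^*)^{1-\mu}$. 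Near $t=t^*$ this class is strictly larger than the unweighted one; in particular $\dot w$ may fail to be $p$-integrable up to $t^*$. On the other hand, membership of the concatenated function in $\mathbb{E}_{1,\mu}(0,T')$ for $T'>t^*$ requires $t^{1-\mu}\dot u\in L_p((t^*,T');X_0)$, and since $t^{1-\mu}\ge (t^*)^{1-\mu}>0$ on that interval this is the \emph{unweighted} condition there. So the concatenation you form need not lie in $\mathbb{E}_{1,\mu}(0,T')$, and the contradiction with the definition of $t^+(u_0)$ does not follow as written.

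The missing ingredient is the instantaneous regularization \eqref{eq:regularize1}--\eqref{eq:regularize2}: since $t^*>0$, the solution satisfies $u\in\mathbb{E}_{1,1}(\tau,t^*)$ for every $\tau\in(0,t^*)$, hence $v=u(t^*)\in X_{\gamma,1}\cap V_\mu$. One then restarts with the \emph{unweighted} theory, i.e.\ Theorem \ref{thm:LWP} with $\mu=1$; this is legitimate because \eqref{eq:Ass_A_F} restricts to a locally Lipschitz map on the open subset $V_\mu\cap X_{\gamma,1}$ of $X_{\gamma,1}$, as $X_{\gamma,1}\hookrightarrow X_{\gamma,\mu}$. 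The restarted solution then lies in $\mathbb{E}_{1,1}(t^*,t^*+\tau)$, which does embed into the restriction of $\mathbb{E}_{1,\mu}(0,T')$ to $(t^*,T')$, and your concatenation argument goes through. The same repair is needed in the lower-semicontinuity step: the restarts at the times $t_k>0$ must be performed in the $X_{\gamma,1}$-topology (the perturbed values $u(t_k,u_1)$ lie in $X_{\gamma,1}$ by the same regularization), since otherwise the glued solution $u(\cdot,u_1)$ is not known to belong to $\mathbb{E}_{1,\mu}(0,T)$. With this modification your proof coincides with the one in \cite{KPW10} and \cite{LPW14}.
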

\begin{proof}
\cite[Corollary 2.2]{KPW10} and \cite[Corollary 2.2]{LPW14}.
\end{proof}
Let us point out another advantage of working in the setting of weighted $L_p$-spaces. To see the benefit, observe that for all $\tau,T\in (0,t^+(u_0))$ with $\tau<T$, the estimate
\begin{equation}\label{eq:regularize1}
\tau^{1-\mu}\|u\|_{\mathbb{E}_{1,1}(\tau,T)}\le \|u\|_{\mathbb{E}_{1,\mu}(\tau,T)}\le  \|u\|_{\mathbb{E}_{1,\mu}(0,T)}
\end{equation}
for the solution $u$ of \eqref{eq:quasilinEE2} holds, hence
\begin{multline}\label{eq:regularize2}
u\in W^1_{p,loc}((0,t^+(u_0));X_0)\cap L_{p,loc}((0,t^+(u_0)),X_1)\\
\hookrightarrow C((0,t^+(u_0));X_{\gamma,1}),
\end{multline}
by Proposition \ref{prop:trace}.
This shows that the solution $u(t)$ of \eqref{eq:quasilinEE} with initial value $u_0\in X_{\gamma,\mu}=(X_0,X_1)_{\mu-1/p,p}$ regularizes instantaneously for $t\in (0,t^+(u_0))$ provided $\mu<1$.

\subsection{Global well-posedness}

Assume that for some $1/p<\mu<\nu\le 1$ it holds that
\begin{equation}\label{eq:compEmb}
X_{\gamma,\nu}\overset{c}{\hookrightarrow} X_{\gamma,\mu}
\end{equation}
and for some $\tau\in (0,t^+(u_0))$
$$u\in BC([\tau,t^+(u_0));X_{\gamma,\nu}),$$
where $\overset{c}{\hookrightarrow}$ stands for a \emph{compact embedding}.
Then, the orbit $\{u(t)\}_{t\in [\tau,t^+(u_0))}$ is relatively compact in $X_{\gamma,\mu}$ which in turn implies that $t^+(u_0)=\infty$ and therefore, the solution $u$ exists globally (see \cite[Section 3]{LPW14} for details). Moreover, combining the continuous dependence of the solution on the initial data from Theorem \ref{thm:LWP} with \eqref{eq:regularize1} \& \eqref{eq:regularize2}, one can even prove relative compactness of $\{u(t)\}_{t\in [\tau,t^+(u_0))}$ in $X_{\gamma,1}$, the \emph{strongest} topology with respect to $\mu\in (1/p,1]$. All these considerations are part of the following result which has been proven in \cite[Theorem 3.1]{LPW14}.
\begin{thm}\label{thm:GWP}
Let $p\in (1,\infty)$, suppose that $A(v)\in\calM\calR_{p}(X_0)$ for all $v\in V_\mu$ and let \eqref{eq:Ass_A_F} hold for some $\mu\in (1/p,1)$. Assume furthermore that \eqref{eq:compEmb} holds for some $\nu\in (\mu,1]$ and that the solution $u$ of \eqref{eq:quasilinEE} satisfies
    $$u\in BC([\tau,t^+(u_0));V_\mu\cap X_{\gamma,\nu})$$
for some $\tau\in (0,t^+(u_0))$ and $\nu\in (\mu,1]$ as well as
    $$\dist(u(t),\partial V_\mu)\ge \eta>0$$
for all $t\in [0,t^+(u_0))$. Then the solution $u$ exists globally and for each $\delta>0$, the orbit $\{u(t)\}_{t\ge \delta}$ is relatively compact in $X_{\gamma,1}$. If $u_0\in V_\mu\cap X_{\gamma,1}$, then $\{u(t)\}_{t\ge 0}$ is relatively compact in $X_{\gamma,1}$.

In addition, the $\omega$-limit set $\omega(u_0)$ defined by
$$\omega(u_0):=\left\{v\in V_\mu\cap X_{\gamma,1}:\ \exists\ t_n\nearrow\infty\ \mbox{s.t.}\ u(t_n,u_0)\to v\ \mbox{in}\ X_{\gamma,1}\right\}$$
is nonempty, compact and connected.
\end{thm}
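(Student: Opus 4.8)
The plan is to establish the four conclusions in turn: global existence, relative compactness of the orbit in $X_{\gamma,1}$ for $t\ge\delta$, relative compactness up to $t=0$ under the extra hypothesis on $u_0$, and finally the structure of $\omega(u_0)$. The first step is a ``no blow-up'' argument. Suppose $t^+(u_0)<\infty$. Since $u\in BC([\tau,t^+(u_0));X_{\gamma,\nu})$ and $X_{\gamma,\nu}\overset{c}{\hookrightarrow}X_{\gamma,\mu}$ by \eqref{eq:compEmb}, the orbit $\{u(t)\}_{t\in[\tau,t^+(u_0))}$ is relatively compact in $X_{\gamma,\mu}$; let $K$ denote its closure there. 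The bound $\dist(u(t),\partial V_\mu)\ge\eta$ survives passage to the limit, so $K$ is a compact subset of $V_\mu$, and $A(v)\in\calMR_p(X_0)$ for each $v\in K$. Applying Theorem \ref{thm:LWP} at the points of $K$ and using the local uniformity of the existence time noted in the remark following it, a finite subcover produces a uniform $T_0>0$ such that \eqref{eq:quasilinEE2} is solvable on $[0,T_0]$ for every initial value in a neighbourhood of $K$. Choosing $t<t^+(u_0)$ with $t^+(u_0)-t<T_0$ and restarting the equation from $u(t)\in K$ extends $u$ past $t^+(u_0)$ by uniqueness, a contradiction. Hence $t^+(u_0)=\infty$, and then \eqref{eq:regularize2} gives $u\in C((0,\infty);X_{\gamma,1})$.

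For the second step, fix $\delta>0$, choose $a\in(0,\min\{\delta,T_0\})$, and let $(s_k)$ be any sequence with $s_k\ge\delta$. If $(s_k)$ has a bounded subsequence, the corresponding values $u(s_k)$ lie in a compact subset of $X_{\gamma,1}$ by continuity, so it suffices to treat $s_k\to\infty$. Set $v_k:=u(s_k-a)$; for $k$ large $v_k\in K$, so after passing to a subsequence $v_k\to v_*$ in $X_{\gamma,\mu}$ with $v_*\in V_\mu$. The continuous-dependence estimate of Theorem \ref{thm:LWP} gives $u(\cdot,v_k)\to u(\cdot,v_*)$ in $\mathbb{E}_{1,\mu}(0,T_0)$, and by uniqueness $u(\cdot,v_k)=u(\cdot+s_k-a,u_0)$ on $[0,T_0]$. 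Restricting to $[a/2,T_0]$ and combining the embedding $\mathbb{E}_{1,\mu}(a/2,T_0)\hookrightarrow\mathbb{E}_{1,1}(a/2,T_0)$ from \eqref{eq:regularize1} with Proposition \ref{prop:trace} in the case $\mu=1$ yields convergence in $BUC([a/2,T_0];X_{\gamma,1})$; evaluating at $t=a$ gives $u(s_k,u_0)=u(a,v_k)\to u(a,v_*)$ in $X_{\gamma,1}$. Thus $\{u(t)\}_{t\ge\delta}$ is relatively compact in $X_{\gamma,1}$. If in addition $u_0\in V_\mu\cap X_{\gamma,1}$, then since $X_{\gamma,1}\hookrightarrow X_{\gamma,\mu}$ the pair $(A,F)$ is also locally Lipschitz on $V_\mu\cap X_{\gamma,1}\subset X_{\gamma,1}$, so Theorem \ref{thm:LWP} applied with $\mu=1$ furnishes $u\in C([0,T_1];X_{\gamma,1})$ near $t=0$, and together with the previous case $\{u(t)\}_{t\ge0}$ is relatively compact in $X_{\gamma,1}$.

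Finally, for $\omega(u_0)$: nonemptiness follows from the relative compactness just proved, and since any $X_{\gamma,1}$-limit of some $u(t_n)$ is also an $X_{\gamma,\mu}$-limit, the distance bound forces $\omega(u_0)\subset V_\mu\cap X_{\gamma,1}$; writing $\omega(u_0)=\bigcap_{s>0}\overline{\{u(t):t\ge s\}}^{\,X_{\gamma,1}}$ as a nested intersection of nonempty compact sets yields compactness. For connectedness one first checks that $\dist_{X_{\gamma,1}}(u(t),\omega(u_0))\to 0$ as $t\to\infty$ (otherwise a subsequence with $\dist_{X_{\gamma,1}}(u(t_n),\omega(u_0))\ge\eps$ would, by relative compactness, converge to some $w\in\omega(u_0)$, which is absurd). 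If $\omega(u_0)=K_1\cup K_2$ with $K_1,K_2$ disjoint, nonempty and compact, put $d:=\dist(K_1,K_2)>0$; for all large $t$ the point $u(t)$ lies within $\tfrac13 d$ of exactly one of the $K_i$, and since $t\mapsto\dist(u(t),K_i)$ is continuous on $(0,\infty)$, the index of that $K_i$ is eventually constant, so only one of the $K_i$ can meet $\omega(u_0)$ — a contradiction. Hence $\omega(u_0)$ is connected.

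The delicate point is the second step, namely upgrading relative compactness from the weighted trace space $X_{\gamma,\mu}$ — where it is essentially free from \eqref{eq:compEmb} — to the unweighted space $X_{\gamma,1}$. This is precisely where the weighted formulation earns its keep: the instantaneous parabolic smoothing encoded in \eqref{eq:regularize1}, routed through the continuous dependence of Theorem \ref{thm:LWP} and the trace embedding of Proposition \ref{prop:trace}, effects the transfer, and one must be careful to run all time intervals on a length $T_0$ that is uniform over the compact set $K$ so that no constant degenerates as $s_k\to\infty$.
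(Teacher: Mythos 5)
Your proposal is correct and follows essentially the same route as the paper: the paper's proof (deferred to \cite[Theorem 3.1]{LPW14}) and the discussion preceding Theorem \ref{thm:GWP} rest on exactly your three pillars --- relative compactness of the orbit in $X_{\gamma,\mu}$ via \eqref{eq:compEmb} plus the locally uniform existence time of Theorem \ref{thm:LWP} to rule out finite-time blow-up, and the combination of the continuous-dependence estimate with the regularization \eqref{eq:regularize1}--\eqref{eq:regularize2} to upgrade compactness to $X_{\gamma,1}$. The $\omega$-limit set argument is the standard one and is carried out the same way in the cited reference.
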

\begin{ex}\label{ex:GWP}
We consider the quasilinear initial-boundary value problem
    \begin{equation}\label{appl1}
    \begin{aligned}
    \partial_t u-\div(a(u)\nabla u)&=f(u,\nabla u) &&\mbox{in}&&\Omega,\\
    u&=0&&\mbox{on}&&\partial\Omega,\\
    u(0)&=u_0&&\mbox{in}&&\Omega,
    \end{aligned}
    \end{equation}
where $\Omega\subset\bbR^n$ is bounded domain with boundary $\partial\Omega\in C^2$, $f\in C^{1}(\bbR\times\bbR^n;\bbR)$, $a\in C^{2}(\bbR;\bbR)$ and $a(s)\ge a_0>0$ for all $s\in\bbR$. Let us first rewrite \eqref{appl1} in the form \eqref{eq:quasilinEE}. To this end, for $1<q<\infty$, we set $X_0=L_q(\Omega)$,
    $$X_1=\{u\in W_q^2(\Omega):u|_{\partial\Omega}=0\},$$
where $u|_{\partial\Omega}$ has to be understood in the sense of traces. In this situation, we have for $\mu\in (1/p,1]$
    $$(X_0,X_1)_{\mu-1/p,p}=\begin{cases}
                                        \{u\in B_{qp}^{2\mu-2/p}(\Omega):u|_{\partial\Omega}=0\},&\ \mbox{if}\ \mu >1/p+1/(2q),\\
                                        B_{qp}^{2\mu-2/p}(\Omega),&\ \mbox{if}\ \mu<1/p+1/(2q),
                                        \end{cases}$$
see e.g. \cite{Gri69}. Let us assume that $2/p+n/q<1$, wherefore the embedding $B_{qp}^{2-2/p}(\Omega)\hookrightarrow C^1(\bar{\Omega})$ is at our disposal. In this case there exists $\mu_0\in (1/p,1)$ such that
    $$B_{qp}^{2-2/p}(\Omega)\overset{c}{\hookrightarrow} B_{qp}^{2\mu-2/p}(\Omega)\overset{c}{\hookrightarrow} C^1(\bar{\Omega}),\quad\mbox{if}\ \mu\in (\mu_0,1),$$
    with compact embeddings.
Indeed, the number $\mu_0\in (1/p,1)$ is given by
    $$\mu_0=\frac{1}{2}+\frac{1}{p}+\frac{n}{2q},$$
provided $2/p+n/q<1$. For $\mu\in (\mu_0,1]$ and $X_{\gamma,\mu}:=(X_0,X_1)_{\mu-1/p,p}$, we define $A:X_{\gamma,\mu}\to \calB(X_0,X_1)$ and $F:X_{\gamma,\mu}\to X_0$ by means of
    $$[A(v)u](x):=-\div(a(v(x))\nabla u(x)),\ x\in\Omega,\ v\in X_{\gamma,\mu},\ u\in X_1,$$
and
    $$F(v)(x):=f(v(x),\nabla v(x)),\ x\in\Omega,\ v\in X_{\gamma,\mu}.$$
From the regularity assumptions on $a$ and $f$ it follows that
    $$(A,F)\in C^{1-}(X_{\gamma,\mu};\calB(X_1,X_0)\times X_0),\quad \mu\in (\mu_0,1].$$
Furthermore, by \cite[Theorem 6.3.2]{PruSim16}, it holds that $A(v)\in \calMR_p(X_0)$ for all $v\in X_{\gamma,\mu}$, $\mu\in (\mu_0,1]$, since the spectral bound of $-A(v)$ is negative for each $v\in X_{\gamma,\mu}$. An application of Theorem \ref{thm:LWP}, Corollary \ref{cor:maxInt} and Theorem \ref{thm:GWP} yields the following result.
\begin{thm}
Let $n\in\bbN$, $p,q\in (1,\infty)$ such that $2/p+n/q<1$, $\Omega\subset\bbR^n$ a bounded domain with boundary $\partial\Omega\in C^2$. Suppose that $\mu>1/2+1/p+n/(2q)$
and let $u_0\in B_{qp}^{2\mu-2/p}(\Omega)$ such that $u_0|_{\partial\Omega}=0$. If the solution $u$
 of \eqref{appl1} satisfies
    $$u\in BC\left([\tau,t^+(u_0));B_{qp}^{2\nu-2/p}(\Omega)\right),$$
for some $\tau\in (0,t^+(u_0))$ and $\nu\in (\mu,1]$, then the solution $u$ exists globally, i.e.\ $t^+(u_0)=\infty$ and for any $\delta>0$, the set $\{u(t)\}_{t\ge \delta}$ is relatively compact in $B_{qp}^{2-2/p}(\Omega)$. Moreover, the $\omega$-limit set
    $$\omega(u_0):=\left\{v\in B_{qp}^{2-2/p}(\Omega):\ \exists\ t_n\nearrow\infty\ \mbox{s.t.}\ u(t_n;u_0)\to v\ \mbox{in}\ B_{qp}^{2-2/p}(\Omega)\right\}$$
is nonempty, connected and compact.
\end{thm}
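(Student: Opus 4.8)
The plan is to recast \eqref{appl1} as an abstract quasilinear problem of the form \eqref{eq:quasilinEE2} and then to apply, in this order, Theorem \ref{thm:LWP}, Corollary \ref{cor:maxInt} and Theorem \ref{thm:GWP}, with all hypotheses checked via the concrete spaces of Example \ref{ex:GWP}. Concretely, I would set $X_0=L_q(\Omega)$, $X_1=\{u\in W_q^2(\Omega):u|_{\partial\Omega}=0\}$, $V_\mu:=X_{\gamma,\mu}=(X_0,X_1)_{\mu-1/p,p}$, and define $A(v)u=-\div(a(v)\nabla u)$ on $X_1$ and $F(v)=f(v,\nabla v)$. Since $\mu>1/2+1/p+n/(2q)>1/p+1/(2q)$, the result of \cite{Gri69} identifies $X_{\gamma,\mu}$ with $\{w\in B_{qp}^{2\mu-2/p}(\Omega):w|_{\partial\Omega}=0\}$, and likewise for $X_{\gamma,\nu}$ and $X_{\gamma,1}$ (the strict inequalities $\nu-1/p\ge\mu-1/p>1/(2q)$ are in force since $2/p+n/q<1$ forces $1/p+1/(2q)<1/2$). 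In particular the hypothesis $u_0\in B_{qp}^{2\mu-2/p}(\Omega)$, $u_0|_{\partial\Omega}=0$ means $u_0\in V_\mu$.

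First I would verify the structural assumptions of Theorem \ref{thm:LWP}. From $2/p+n/q<1$ and $\mu>\mu_0=1/2+1/p+n/(2q)$ one obtains $2\mu-2/p>1+n/q$, hence the embedding $X_{\gamma,\mu}\hookrightarrow C^1(\bar\Omega)$; combined with $a\in C^2(\bbR)$ and $f\in C^1(\bbR\times\bbR^n)$, the usual Nemytskii/superposition estimates yield $(A,F)\in C^{1-}(X_{\gamma,\mu};\calB(X_1,X_0)\times X_0)$, i.e.\ \eqref{eq:Ass_A_F}. Next, for each $v\in X_{\gamma,\mu}$ the leading coefficient $a(v(\cdot))\in C^1(\bar\Omega)$ satisfies $a(v(\cdot))\ge a_0>0$, so $A(v)$ is a uniformly normally elliptic second-order operator with continuous coefficients and homogeneous Dirichlet boundary conditions on a $C^2$ domain; its spectral bound is negative (Dirichlet condition together with $a\ge a_0$), whence $A(v)\in\calMR_p(X_0)$ by \cite[Theorem 6.3.2]{PruSim16}. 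Theorem \ref{thm:LWP} then furnishes the unique maximal solution, and Corollary \ref{cor:maxInt} supplies the maximal interval of existence $[0,t^+(u_0))$.

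It remains to check the hypotheses of Theorem \ref{thm:GWP} for the given $\nu\in(\mu,1]$. The compact embedding \eqref{eq:compEmb}, $X_{\gamma,\nu}\overset{c}{\hookrightarrow}X_{\gamma,\mu}$, follows from the compactness of $B_{qp}^{2\nu-2/p}(\Omega)\overset{c}{\hookrightarrow}B_{qp}^{2\mu-2/p}(\Omega)$ on the bounded domain $\Omega$ (as $\mu<\nu$). Since $V_\mu=X_{\gamma,\mu}$ is the whole space, $\partial V_\mu=\emptyset$ and the condition $\dist(u(t),\partial V_\mu)\ge\eta>0$ is vacuous; moreover every $u(t)$ satisfies the Dirichlet condition, so the assumed bound $u\in BC([\tau,t^+(u_0));B_{qp}^{2\nu-2/p}(\Omega))$ is precisely $u\in BC([\tau,t^+(u_0));V_\mu\cap X_{\gamma,\nu})$. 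Theorem \ref{thm:GWP} then yields $t^+(u_0)=\infty$, relative compactness of $\{u(t)\}_{t\ge\delta}$ in $X_{\gamma,1}$ for every $\delta>0$, and that $\omega(u_0)$ is nonempty, compact and connected. Finally $X_{\gamma,1}=\{w\in B_{qp}^{2-2/p}(\Omega):w|_{\partial\Omega}=0\}\hookrightarrow B_{qp}^{2-2/p}(\Omega)$, so relative compactness in $X_{\gamma,1}$ entails relative compactness in $B_{qp}^{2-2/p}(\Omega)$, which is the assertion.

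The only step that is more than bookkeeping is the simultaneous verification of the Lipschitz regularity \eqref{eq:Ass_A_F} and of $A(v)\in\calMR_p(X_0)$ for all $v$; both rest on the sub-criticality condition $2/p+n/q<1$ (equivalently $\mu_0<1$), which is exactly what forces $X_{\gamma,\mu}\hookrightarrow C^1(\bar\Omega)$ and thereby lets one control the superposition maps $v\mapsto a(v)$ and $v\mapsto f(v,\nabla v)$. Everything else is substitution of the concrete spaces into the abstract theorems.
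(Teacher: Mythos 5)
Your proof is correct and follows essentially the same route as the paper: the theorem is obtained by substituting the concrete setting of Example \ref{ex:GWP} (with $X_0=L_q(\Omega)$, $X_1=\{u\in W_q^2(\Omega):u|_{\partial\Omega}=0\}$, $V_\mu=X_{\gamma,\mu}$, the Grisvard identification of the trace spaces, and $A(v)\in\calMR_p(X_0)$ via \cite[Theorem 6.3.2]{PruSim16}) into Theorem \ref{thm:LWP}, Corollary \ref{cor:maxInt} and Theorem \ref{thm:GWP}. Your verification of the embedding $X_{\gamma,\mu}\hookrightarrow C^1(\bar\Omega)$ from $\mu>\mu_0$ and of the vacuity of the distance condition matches the paper's argument.
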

\end{ex}

\subsection{Nonlinearities with polynomial growth}\label{sec:polynom}

The condition \eqref{eq:Ass_A_F} is for quite general functions $(A,F)$ and does not account for nonlinearities having a certain growth. Consider for example the PDE
\begin{equation}\label{eq:crit1}
\partial_t u-\div(a(u)\nabla u)=f(u).
\end{equation}
We compute (assuming that $a$ is sufficiently regular)
$$\div(a(u)\nabla u)=a(u)\Delta u+a'(u)|\nabla u|^2.$$
Therefore, we may also consider \eqref{eq:crit1} in the form
\begin{equation}\label{eq:crit2}
\partial_t u-a(u)\Delta u=f(u)+a'(u)|\nabla u|^2.
\end{equation}
Let us compare \eqref{eq:crit2} with the PDE
\begin{equation}\label{eq:crit3}
\partial_t u-a(u)\Delta u=f(u).
\end{equation}
For \eqref{eq:crit2} and \eqref{eq:crit3}, we choose the setting $X_0=L_q(\Omega)$ and
$$X_1=\{u\in W_q^2(\Omega):u|_{\partial\Omega}=0\}.$$
Then, as in Example \ref{ex:GWP}, the trace space is computed to the result
$$X_{\gamma,\mu}=\begin{cases}
                                        \{u\in B_{qp}^{2\mu-2/p}(\Omega):u|_{\partial\Omega}=0\},&\ \mbox{if}\ \mu >1/p+1/(2q),\\
                                        B_{qp}^{2\mu-2/p}(\Omega),&\ \mbox{if}\ \mu<1/p+1/(2q),
                                        \end{cases}$$
To solve \eqref{eq:crit3} in this setting, we require $X_{\gamma,\mu}\hookrightarrow C(\overline{\Omega})$, i.e. $2\mu>2/p+n/q$. Note that this is possible if $2/p+n/q<2$, in order to ensure $\mu\in (1/p,1]$. We turn back to \eqref{eq:crit2}. The terms $a(u)\Delta u$ and $f(u)$ are as in \eqref{eq:crit3} while the remaining term $a'(u)|\nabla u|^2$ seems to induce additional conditions on the weight $\mu$ for solving \eqref{eq:crit2}. However, it is of fundamental importance to observe that the last term in \eqref{eq:crit2} has a certain structure: it is of quadratic growth and of lower order compared to $a(u)\Delta u$.

Such a class of nonlinearities with a certain growth behaviour has been considered in \cite{LPW14}, \cite{PrSiWi18} and \cite{PrWi17}. We write
$$F(u)=F_r(u)+F_s(u)$$
where $F_r$ and $F_s$ are the regular and singular part of $F$, respectively. In the sequel, we denote by $X_\beta=(X_0,X_1)_\beta$, $\beta\in (0,1)$,  the complex interpolation spaces. The precise assumptions on $(A,F_r,F_s)$ are as follows.

\medskip
\noindent
{\bf (H1)} $(A,F_r)\in C^{1-}(V_\mu; \calB(X_1,X_0)\times X_0)$.

\medskip

\noindent
{\bf (H2)} $F_s: V_\mu\cap X_\beta \to X_0$ satisfies the estimate
$$
\|F_s(u_1)-F_s(u_2)\|_{X_0} \leq C \sum_{j=1}^m (1+\|u_1\|_{X_\beta}^{\rho_j}+\|u_2\|_{X_\beta}^{\rho_j})\|u_1-u_2\|_{X_{\beta_j}},
$$
$ u_1, u_2\in V_\mu\cap X_\beta$,
for some numbers $m\in\bbN$, $\rho_j\geq 0$, $\beta\in (\mu-1/p,1)$, $\beta_j\in [\mu-1/p, \beta]$, where $C$ denotes a constant which may depend on $\|u_i\|_{X_{\gamma,\mu}}$.
The case $\beta_j=\mu-1/p$ is only admissible if
{\bf (H2)} holds with $X_{\beta_j}$ replaced by $X_{\gamma,\mu}$.

\medskip

\noindent
{\bf (H3)} For all $j=1,\ldots,m$ we have
$$ \rho_j( \beta-(\mu-1/p)) + (\beta_j -(\mu-1/p)) \leq 1 -(\mu-1/p).$$
Allowing for equality in {\bf (H3)} is not for free and we additionally need to impose the following structural {\bf Condition (S)} on the Banach spaces $X_0$ and $X_1$.

\medskip

\noindent
{\bf (S)} The space $X_0$ is of class UMD and the embedding
$$ {W}^{1}_p(\bbR;X_0)\cap L_{p}(\bbR;X_1)\hookrightarrow {H}^{1-\beta}_{p}(\bbR;X_\beta),$$
is valid for each $\beta\in [0,1]$.
\goodbreak
\begin{rem}\mbox{}
\begin{enumerate}
\item By the {\em Mixed Derivative Theorem}, condition {\bf (S)} is in particular satisfied if $X_0$ is of class UMD, and if there is an operator $B$ in $X_0$, with domain
${\sf D}(B)=X_1$, such that $B$ possesses a bounded $\calH^\infty$-calculus in $X_0$ (for short  $B\in\calH^\infty(X_0)$)  with $\calH^\infty$-angle $\phi_{B}^\infty<\pi/2$. We refer to Pr\"uss and Simonett \cite[Section 4.5]{PruSim16} for details.
\item Hypothesis {\bf (H3)} with strict inequality has been considered in \cite{LPW14}. In this case, condition {\bf (S)} can be neglected.
\end{enumerate}
\end{rem}
\noindent
Let us apply {\bf (H2)} to the nonlinearity $F_s(u):=a'(u)|\nabla u|^2$ in \eqref{eq:crit2}. In this case, we have $m=1$, $\beta_1=\beta$ and $\rho_1=1$ and
$$2\beta=1+\frac{n}{2q}$$
to ensure the embedding $H_q^{2\beta}(\Omega)\hookrightarrow H_{2q}^1(\Omega)$.
From {\bf (H3)} it then follows that
$$2\mu\ge \frac{2}{p}+\frac{n}{q},$$
hence there are \emph{no additional conditions} on $\mu$ in \eqref{eq:crit2}.

The extension of Theorem \ref{thm:LWP} to the setting of this section reads as follows.
\begin{thm}\label{thm:criticalLWP} Suppose that the structural condition {\bf (S)} holds, and assume that hypotheses {\bf (H1), (H2), (H3)} are valid. Fix any $u_0\in V_\mu$ such that $A_0:= A(u_0)\in\calMR_p(X_0)$. Then there  is $T=T(u_0)>0$ and $\varepsilon =\varepsilon(u_0)>0$ with $\bar{B}_{X_{\gamma,\mu}}(u_0,\varepsilon) \subset V_\mu$ such that the problem
\begin{equation}\label{eq:quasilinEE3}
\dot{u}(t)+A(u(t))u(t)=F_r(u(t))+F_s(u(t)),\quad t>0,\quad u(0)=u_1,
\end{equation}
admits a unique solution
$$ u(\cdot, u_1)\in W^1_{p,\mu}((0,T);X_0)\cap L_{p,\mu}((0,T); X_1) \cap C([0,T]; V_\mu),$$
for each initial value $u_1\in \bar{B}_{X_{\gamma,\mu}}(u_0,\varepsilon)$. There is a constant $c= c(u_0)>0$ such that
$$ \|u(\cdot,u_1)-u(\cdot,u_2)\|_{\bbE_{1,\mu}(0,T)} \leq c\|u_1-u_2\|_{X_{\gamma,\mu}},$$
for all $u_1,u_2\in \bar{B}_{X_{\gamma,\mu}}(u_0,\varepsilon)$.
\end{thm}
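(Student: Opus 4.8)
The plan is to reduce \eqref{eq:quasilinEE3} to a fixed point problem for a strict contraction on a suitable closed ball of the maximal regularity space, exactly as sketched for Theorem \ref{thm:LWP}, but now carrying the singular part $F_s$ along. Fix $u_0 \in V_\mu$ with $A_0 := A(u_0) \in \calMR_p(X_0)$, and choose $\varepsilon_0 > 0$ so small that $\bar{B}_{X_{\gamma,\mu}}(u_0, 2\varepsilon_0) \subset V_\mu$. For $T \in (0,1]$ and $\varepsilon \in (0,\varepsilon_0]$ to be fixed later, consider the set
$$\Sigma := \{ w \in \bbE_{1,\mu}(0,T) : w(0) = u_1,\ \|w - u_0^*\|_{\bbE_{1,\mu}(0,T)} \le R \},$$
where $u_0^* := e^{-tA_0}u_0$ (the mild solution of the linearized homogeneous problem with the fixed initial value $u_0$, suitably extended/restricted as in Theorem \ref{thm:lin}) and $R = R(u_0)$ is a radius to be chosen; note $\Sigma$ is a nonempty closed subset of the affine space $\{w(0) = u_1\}$ provided $\|u_1 - u_0\|_{X_{\gamma,\mu}} \le \varepsilon$ is small. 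For $w \in \Sigma$ define $v = \Phi(w)$ as the unique solution in $\bbE_{1,\mu}(0,T)$ of the linear problem
$$\dot v(t) + A_0 v(t) = F_r(w(t)) + F_s(w(t)) + (A_0 - A(w(t)))w(t),\quad v(0) = u_1,$$
which exists by Theorem \ref{thm:lin} once we verify the right-hand side lies in $L_{p,\mu}((0,T);X_0)$.

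The first key step is precisely this mapping property. By \textbf{(H1)} the terms $F_r(w)$ and $(A_0 - A(w))w$ are handled exactly as in the proof of Theorem \ref{thm:LWP}, using the embedding $\bbE_{1,\mu}(0,T) \hookrightarrow C([0,T];X_{\gamma,\mu})$ from Proposition \ref{prop:trace} and local Lipschitz continuity. The new point is $F_s(w)$: by \textbf{(H2)}, $\|F_s(w(t))\|_{X_0} \lesssim 1 + \|w(t)\|_{X_\beta}^{\rho_j+1}$ pointwise, so I must show $[t \mapsto \|w(t)\|_{X_\beta}^{\rho_j+1}] \in L_{p,\mu}(0,T)$. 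Here \textbf{Condition (S)} enters decisively: it gives $\bbE_{1,\mu}(0,T) \hookrightarrow H^{1-\beta}_{p,\mu}((0,T);X_\beta)$ (the time-weighted analogue, obtained from the unweighted embedding in \textbf{(S)} by the extension argument and Theorem \ref{thm:charactMR}/the mixed-derivative machinery), and then the mixed scalar-valued embedding controls $\|w\|_{X_\beta}$ in an appropriate $L_{q}$-in-time norm with weight. A careful bookkeeping of the exponents — combining the time-integrability gained from $H^{1-\beta}_{p,\mu}$ with the interpolation relation $2\beta$ vs. the Sobolev index — shows that \textbf{(H3)} (with equality allowed thanks to \textbf{(S)}) is exactly the condition under which the product $\|w\|_{X_\beta}^{\rho_j+1}$ is $L_{p,\mu}$-integrable on $(0,T)$, uniformly for $w \in \Sigma$, with a bound that does \emph{not} blow up as $T \to 0$. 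This is the step I expect to be the main obstacle: the borderline case of equality in \textbf{(H3)} is genuinely delicate, since one cannot gain a positive power of $T$ and must instead extract smallness from the contraction estimate itself and from the continuity of the Nemytskii-type maps near $u_0$.

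The second step is the self-mapping and contraction estimates. Writing $v_i = \Phi(w_i)$, the difference $v_1 - v_2$ solves the linear problem with zero initial value and right-hand side $[F_r(w_1) - F_r(w_2)] + [F_s(w_1) - F_s(w_2)] + [(A_0 - A(w_1))w_1 - (A_0 - A(w_2))w_2]$, so by the maximal regularity estimate \eqref{eq:MRest} with $u_0 = 0$,
$$\|v_1 - v_2\|_{\bbE_{1,\mu}(0,T)} \le C \| \text{RHS} \|_{L_{p,\mu}((0,T);X_0)}.$$
The Lipschitz terms from \textbf{(H1)} contribute a factor that is small for $T$ small (standard). For the $F_s$ difference one uses the Lipschitz-type bound in \textbf{(H2)} together with the same time-integrability analysis as above, now applied to $\|w_1 - w_2\|_{X_{\beta_j}}$; invoking \textbf{(S)} once more and interpolating, one obtains $\|F_s(w_1) - F_s(w_2)\|_{L_{p,\mu}} \le \kappa(T,R)\,\|w_1 - w_2\|_{\bbE_{1,\mu}(0,T)}$ with $\kappa(T,R) \to 0$ as $R \to 0$ (and $T$ fixed small), the smallness again coming from the $X_\beta$-norms of $w_i$ being close to $\|u_0\|_{X_\beta}$ on a short interval. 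Choosing first $R$, then $\varepsilon$, then $T$ appropriately makes $\Phi$ a $1/2$-contraction on $\Sigma$ into itself, and the Banach fixed point theorem yields the unique solution $u(\cdot, u_1)$; the stated Lipschitz dependence on $u_1$ follows by applying \eqref{eq:MRest} to the difference of two fixed points with the same right-hand-side structure but initial values $u_1, u_2$, absorbing the contraction term. Finally, uniqueness in the full class (not just in $\Sigma$) and membership in $C([0,T];V_\mu)$ follow from Proposition \ref{prop:trace} and a standard continuation/shrinking argument.
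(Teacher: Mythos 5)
Your overall architecture is the one used in the cited source \cite[Theorem 1.2]{PrSiWi18} (the paper itself only refers to that result): linearize at $A_0=A(u_0)$, set up a contraction on a ball around a reference function in $\bbE_{1,\mu}(0,T)$, and use condition \textbf{(S)} to convert the parabolic regularity into time-integrability of $\|w(t)\|_{X_\beta}$ so that \textbf{(H2)}--\textbf{(H3)} make the Nemytskii map $w\mapsto F_s(w)$ well defined and Lipschitz into $L_{p,\mu}((0,T);X_0)$. However, the step you yourself identify as the main obstacle --- extracting smallness when \textbf{(H3)} holds with equality --- is resolved incorrectly in your sketch. You propose to get smallness from ``the $X_\beta$-norms of $w_i$ being close to $\|u_0\|_{X_\beta}$ on a short interval.'' This cannot work: the standing assumption is $\beta\in(\mu-1/p,1)$, so $X_\beta$ is strictly smaller than the trace space $X_{\gamma,\mu}=(X_0,X_1)_{\mu-1/p,p}$, and a generic $u_0\in V_\mu\subset X_{\gamma,\mu}$ does \emph{not} belong to $X_\beta$; correspondingly $\|w(t)\|_{X_\beta}$ is in general unbounded as $t\to 0_+$. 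There is no ``$\|u_0\|_{X_\beta}$'' to be close to, and the quantity $\kappa(T,R)$ you invoke does not tend to $0$ by the mechanism you describe.

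The correct device, which your proof needs and does not contain, is the splitting $w=u_0^*+(w-u_0^*)$ combined with two separate facts. First, for the deviation $w-u_0^*$, which has vanishing time-trace, the embedding furnished by \textbf{(S)} (after restriction to $(0,T)$ and weighting) holds with a constant \emph{independent of} $T$ on the subspace ${}_0\bbE_{1,\mu}(0,T)$ of functions vanishing at $t=0$; hence the corresponding contribution to the $L$-norm of $\|w(t)\|_{X_\beta}$ is controlled by the radius $R$ alone and is made small by shrinking $R$. Second, for the fixed reference function $u_0^*$, its norm in the relevant weighted Lebesgue space over $(0,T)$ tends to $0$ as $T\to 0_+$ by absolute continuity of the integral --- not because $u_0^*(t)$ stays near $u_0$ in $X_\beta$. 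In the critical (equality) case of \textbf{(H3)} no positive power of $T$ is available, so these two sources of smallness are the \emph{only} ones, and both the $T$-uniformity of the zero-trace embedding and the vanishing of the tail norm of $u_0^*$ must be established explicitly (this is where \textbf{(S)} and the Mixed Derivative Theorem are actually used). As written, your argument would only prove the subcritical case of strict inequality in \textbf{(H3)}, where a power of $T$ can be extracted and condition \textbf{(S)} is not needed at all; the critical case, which is the point of the theorem, is left with a genuine gap.
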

\begin{proof}
\cite[Theorem 1.2]{PrSiWi18}
\end{proof}

\subsection{Critical spaces}\label{sec:critspaces}

Looking into the literature, there is no universally accepted definition of \emph{critical spaces}. One possible definition may be
based on the idea of a ‘largest space of initial data such that a given PDE is
well-posed. On the other hand, critical spaces are often introduced as ‘scaling invariant spaces,’ provided the underlying PDE enjoys a scaling.

We consider again the setting from Section \ref{sec:polynom}. Note that the condition {\bf (H3)} implies that the minimal value of $\mu$ is given by
$$ \mu_{c} = \frac{1}{p} + \beta -\min_j(1-\beta_j)/\rho_j,$$
which we call the {\em critical weight} as long as $\mu_c\in (1/p,1]$. Theorem \ref{thm:criticalLWP} shows in particular that we have local well-posedness of \eqref{eq:quasilinEE3} for initial values in
the spaces $X_{\gamma,\mu}$, provided {\bf (H1)} holds for $\mu\in [\mu_c,1]$. Therefore, it makes sense to name the space $X_{\gamma,\mu_c}=(X_0,X_1)_{\mu_c-1/p,p}$ the {\em critical space} for \eqref{eq:quasilinEE3}.
The critical space $X_{\gamma,\mu_c}$ enjoys the following properties.
\begin{itemize}
\item Generically, the critical space $X_{\gamma,\mu_c}$ is the largest space such that the underlying evolution equation is well-posed for initial values in $X_{\gamma,\mu_c}$. A concrete counterexample is given in \cite[Section 2.2]{PrSiWi18}.
\item If the underlying evolution equation admits a scaling, then the critical space $X_{\gamma,\mu_c}$ is scaling invariant. This has been proven in \cite[Section 2.3]{PrSiWi18}. A typical example is given by the Navier-Stokes equations in $\bbR^n$:
\begin{equation*}
\begin{aligned}
\partial_t u + u\cdot \nabla u -\Delta u +\nabla\pi =0,\quad
{\rm div}\, u =0,
\end{aligned}
\end{equation*}
which is invariant under the scaling
$$(u_\lambda (t,x),\pi_\lambda(t,x)):=(\lambda u(\lambda^2 t,\lambda x),\lambda^2 \pi(\lambda^2 t,\lambda x)).$$
Here $u$ denotes the velocity field and $\pi$ the pressure.
\item The critical spaces $X_{\gamma,\mu_c}$ are invariant with respect to interpolation-extrapolation scales (see e.g. \cite{Ama95} for the theory of those scales). This has been proven in \cite[Section 2.4]{PrSiWi18}. Considering a PDE in a scale of function spaces gives great flexibility in choosing an appropriate setting for analyzing a given equation.
\end{itemize}
In particular, the above definition of $X_{\gamma,\mu_c}$ encompasses the aforementioned properties of critical spaces from the literature.

Of special interest from a viewpoint of applications are semilinear evolution
equations of the form
\begin{equation}
\label{eq:semilin}
 \dot{u}(t) +Au(t) = G(u(t),u(t)),\; t>0,\quad u(0)=u_0.
\end{equation}
Here $G:X_\beta\times X_\beta \to X_0$ is bilinear and bounded, with the complex interpolation spaces
$ X_\beta = (X_0,X_1)_\beta$ and $A\in \calH^\infty(X_0)$ with domain
${ D}(A)=X_1$ and $\calH^\infty$-angle $\phi_{A}^\infty<\pi/2$. In this setting, $F_r=0$ and $F_s(u)=G(u,u)$, so that {\bf (H1)} is satisfied and {\bf (H2)} holds with $m=1$, $\rho_1=1$ and $\beta_1=\beta$. Hypotheses {\bf (H3)} then reads
\begin{equation}\label{eq:critweight}
2\beta-1\le \mu-1/p,
\end{equation}
so that the critical weight $\mu_c$ is given by $\mu_c=1/p+2\beta-1$ in case $\beta>1/2$ and
$$X_{\gamma,\mu_c}=(X_0,X_1)_{\mu_c-1/p,p}=(X_0,X_1)_{2\beta-1,p}$$
is the critical space.
\begin{ex}
Let $n\ge 2$, $\Omega\subset\bbR^n$ be a bounded domain with boundary $\partial\Omega$ of class $C^{3-}$, and consider the Navier-Stokes problem
\begin{equation}\label{NS}
\begin{aligned}
\partial_t u +u\cdot\nabla u-\Delta u +\nabla \pi &=0 &&\mbox{in }&& \Omega,\\
{\rm div}\, u &=0 &&\mbox{in }&&\Omega,\\
u&=0&& \mbox{on }&& \partial\Omega,\\
u(0)&=u_0&& \mbox{in }&& \Omega.
\end{aligned}
\end{equation}
Employing the Helmholtz projection $P$ in $L_q(\Omega)^n$, $1<q<\infty$, \eqref{NS} can be reformulated as the abstract semilinear evolution equation
\begin{equation}\label{ANS}
\dot{u} + Au = F(u),\; t>0,\quad u(0)=u_0,
\end{equation}
 in the Banach space
$X_0:=L_{q,\sigma}(\Omega):=P L_q(\Omega)^n$, where $A=-P\Delta$ is the Dirichlet-Stokes operator with domain
$$D(A) := \{ u\in H^2_q(\Omega)^n\cap L_{q,\sigma}(\Omega):\; u=0 \mbox{ on } \partial\Omega\}$$ and the bilinear nonlinearity $F$ is defined by
\begin{equation*}\label{F}
F(u) = G(u,u),\quad G(u_1,u_2)=-P (u_1\cdot\nabla) u_2.
\end{equation*}
In \cite{PrWi17} we computed the criticial weight $\mu_c = 1/p+ n/2q -1/2$ and the corresponding critical spaces
$$X_{\gamma,\mu_c}={_0B}^{n/q-1}_{qp}(\Omega)^n\cap L_{q,\sigma}(\Omega)$$
for \eqref{ANS}, provided $q\in (1,n)$, $p\in (1,\infty)$ such that $2/p+n/q\leq 3$. Here the subscript 0 indicates that $u|_{\partial\Omega}=0$ whenever the trace exists.
In the particular case $n=3$ and $p=q=2$ we have
$$ X_{\gamma,\mu_c} = D(A^{1/4}),\quad X_{\gamma,1}={_0H}^{1}_2(\Omega)^3\cap L_{2,\sigma}(\Omega)=D(A^{1/2}),$$
which yields the celebrated Fujita-Kato theorem, proved first in 1962 by means of the famous Fujita-Kato iteration, see \cite{FuKa62}.
%
\end{ex}
\begin{rem}
Critical spaces for the Navier-Stokes equations with perfect-slip as well as partial-slip boundary conditions haven been characterized in \cite{PrWi18}. Further examples in the context of critical spaces include
\begin{itemize}
\item the Cahn-Hilliard equation
\item the Vorticity equation
\item Convection-Diffusion equations
\item Chemotaxis equations
\end{itemize}
which can be found in \cite[Sections 3 \& 5]{PrSiWi18}. 
\end{rem}
Last but not least, let us state from \cite[Theorem 2.4]{PrSiWi18} another important application of critical spaces. It is a result of \emph{Serrin type} which connects global-in-time existence to an integral a-priori bound for the solution in the critical topology.
\begin{thm}
\label{thm:serrin}
Let $p\in (1,\infty)$, $\beta>1/2$ and $\mu_c :=2\beta -1 +1/p\leq 1$ be the critical weight.
 Assume $u_0\in X_{\gamma,\mu_c}$, and let $u$ denote the unique solution of \eqref{eq:semilin} with maximal interval of existence $[0,t^+(u_0))$. Then
\begin{enumerate}
\item[{\bf (i)}] $u\in L_p((0,a);X_{\mu_c})$, for each $a<t^+(u_0)$.
\vspace{1mm}
\item[{\bf (ii)}] If $t^+(u_0)<\infty$ then $ u\not\in L_p((0,t^+(u_0));X_{\mu_c})$.
\end{enumerate}
In particular, the solution exists globally if $u\in L_p((0,a);X_{\mu_c})$ for any finite number~$a$ with $a\le t^+(u_0)$.
\end{thm}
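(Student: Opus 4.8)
The plan is to derive the statement from the local existence and continuation results of the previous sections, applied in the critical setting. The semilinear problem \eqref{eq:semilin} fits the framework of Theorem \ref{thm:criticalLWP} with $F_r=0$, $F_s(u)=G(u,u)$, $m=1$, $\rho_1=1$, $\beta_1=\beta$, so that {\bf (H1)}--{\bf (H3)} hold with equality in {\bf (H3)} (this is precisely \eqref{eq:critweight}), and condition {\bf (S)} is guaranteed by $A\in\calH^\infty(X_0)$ with angle $<\pi/2$ via the Mixed Derivative Theorem. Since $A$ has maximal $L_p$-regularity (being an $\calH^\infty$-operator with small angle on a UMD space), Theorem \ref{thm:criticalLWP} and the corresponding continuation statement (the critical-space analogue of Corollary \ref{cor:maxInt}) provide a unique maximal solution $u\in\bbE_{1,\mu_c}(0,a)\cap C([0,a];X_{\gamma,\mu_c})$ for every $a<t^+(u_0)$, together with a blow-up/continuation criterion: if $t^+(u_0)<\infty$, then $u$ cannot be continued, and this must be reflected in the failure of the time-integrability of $u$ in the critical norm.

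For assertion {\bf (i)}, the key point is the embedding coming from the critical choice of exponents. By {\bf (S)} (extended from $\bbR$ to a finite interval by standard extension arguments), one has $\bbE_{1,\mu}((0,a))\hookrightarrow H^{1-\beta}_{p,\mu}((0,a);X_\beta)$; in the critical case, tracking the time weight shows that the solution, which a priori lies in $C([0,a];X_{\gamma,\mu_c})$ with $X_{\gamma,\mu_c}=(X_0,X_1)_{2\beta-1,p}$, in fact satisfies $u\in L_p((0,a);X_{\mu_c})$ where $X_{\mu_c}=X_{2\beta-1}$ is the complex interpolation space of the same order. Concretely, I would interpolate: $u\in W^1_{p,\mu_c}\cap L_{p,\mu_c}(X_1)$ controls, via the mixed derivative / anisotropic embedding adapted to the weight $\mu_c$, the norm $\int_0^a\|u(t)\|_{X_{2\beta-1}}^p\,dt$, because the "lost" regularity in time (from the weight) is exactly balanced by the gap $1-(2\beta-1)=2(1-\beta)$ in space; the identity $\mu_c-1/p=2\beta-1$ is what makes the trace space and the integrated space share the same interpolation exponent. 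This is a computation with the weighted mixed-derivative embedding, not conceptually hard, but it is the technical heart of {\bf (i)}.

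For assertion {\bf (ii)}, I argue by contradiction: suppose $t^+(u_0)<\infty$ but $u\in L_p((0,t^+(u_0));X_{\mu_c})$. The strategy is to show that then the solution can be extended beyond $t^+(u_0)$, contradicting maximality. Pick $t_0<t^+(u_0)$ close to $t^+$; by absolute continuity of the integral, $\|u\|_{L_p((t_0,t^+(u_0));X_{\mu_c})}$ is small. One then sets up the fixed-point argument of Theorem \ref{thm:criticalLWP} on $(t_0,t_0+\delta)$ with initial value $u(t_0)\in X_{\gamma,\mu_c}$, but now estimating the nonlinearity $G(u,u)$ using the smallness of the critical norm of $u$ on $(t_0,t^+(u_0))$ rather than the smallness of $\delta$: since $G$ is bilinear and bounded $X_\beta\times X_\beta\to X_0$, and the critical embedding turns $L_p(X_{\mu_c})$-smallness into smallness of the relevant superposition operator on the maximal regularity space, one obtains a contraction on an interval whose length does not shrink to zero as $t_0\uparrow t^+(u_0)$. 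Hence $t^+(u_0)$ was not maximal — the contradiction. The main obstacle here is making the nonlinear estimate genuinely \emph{uniform}: one must show that the norm of $[w\mapsto \text{solution of the linear problem with right-hand side }G(w,w)]$ as a self-map of $\bbE_{1,\mu_c}(t_0,t_0+\delta)$, restricted to a ball around the reference solution, is controlled by the small critical-norm quantity $\|u\|_{L_p((t_0,t^+(u_0));X_{\mu_c})}$ plus terms that are small with $\delta$ — this requires the scaling-critical bookkeeping of {\bf (H3)} with equality, which is exactly where condition {\bf (S)} is indispensable. The final sentence ("in particular, global existence follows") is then immediate: if the critical integral is finite up to $a\le t^+(u_0)$, then by {\bf (ii)} we cannot have $t^+(u_0)<\infty$.
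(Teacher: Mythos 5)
The survey itself offers no proof of Theorem \ref{thm:serrin}; it only cites \cite[Theorem 2.4]{PrSiWi18}. Measured against the proof given there, your strategy is the right one and is essentially the original argument: (i) follows from the weighted mixed-derivative embedding supplied by condition {\bf (S)}, and (ii) is proved by contradiction, using absolute continuity of the integral to make the critical norm small near $t^+(u_0)$ and then showing that the solution extends to $t=t^+(u_0)$ in the phase space. Two points in your write-up need repair, though.

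First, your identification of the space in (i) is wrong: $X_{\mu_c}=(X_0,X_1)_{\mu_c}$ with $\mu_c=2\beta-1+1/p$, not $X_{2\beta-1}$; the trace space $X_{\gamma,\mu_c}=(X_0,X_1)_{2\beta-1,p}$ and the Serrin space $X_{\mu_c}$ do \emph{not} share the same interpolation exponent --- the whole content of (i) is that $L_p$-integrability in time buys the extra $1/p$ of spatial regularity beyond what $u\in C([0,a];X_{\gamma,\mu_c})$ gives. The correct route is to take $\theta=\mu_c$ (not $\theta=\beta$) in {\bf (S)}, giving $\mathbb{E}_{1,\mu_c}(0,a)\hookrightarrow H^{1-\mu_c}_{p,\mu_c}((0,a);X_{\mu_c})$, and then to invoke the weighted Sobolev embedding $H^{s}_{p,\mu}\hookrightarrow L_p$, valid precisely when $s\ge 1-\mu$; here $s=1-\mu_c$ and $\mu=\mu_c$, so one lands exactly at the endpoint. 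Second, in (ii) the step you yourself flag as the obstacle --- turning smallness of $\|u\|_{L_p((t_0,t^+);X_{\mu_c})}$ into a contraction estimate --- is not carried out, and it cannot be obtained by simply citing Theorem \ref{thm:criticalLWP}, whose existence time depends on the initial value. The missing ingredient is the interpolation inequality $\|v\|_{X_\beta}\le C\|v\|_{X_{\mu_c}}^{1/2}\|v\|_{X_{2\beta-\mu_c}}^{1/2}$ (valid because $\beta$ is the midpoint of $\mu_c$ and $2\beta-\mu_c=1-1/p$), which yields $\|G(v,v)\|_{X_0}\le C\|v\|_{X_{\mu_c}}\|v\|_{X_{2\beta-\mu_c}}$ and, after H\"older in time and the embeddings of $\mathbb{E}_{1,\mu_c}$, an a priori bound on $\|u\|_{\mathbb{E}_{1,\mu_c}(0,t^+)}$ by iterating over finitely many subintervals on which the critical norm is small; then $\mathbb{E}_{1,\mu_c}(0,t^+)\hookrightarrow C([0,t^+];X_{\gamma,\mu_c})$ produces a well-defined value $u(t^+)\in V_\mu$ from which Theorem \ref{thm:criticalLWP} continues the solution, giving the contradiction. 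With these two repairs your outline becomes the proof of \cite{PrSiWi18}.
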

\noindent
We remind that $X_{\mu_c}=(X_0,X_1)_{\mu_c}$ denotes the \emph{complex} interpolation space. Theorem \ref{thm:serrin} has in particular been applied in \cite{PrWi18} and \cite{SiWi22} to prove the global well-posedness of the Navier-Stokes equations in bounded domains of $\bbR^2$ and on two-dimensional compact manifolds, respectively.

\section{Further implications and applications of weighted $L_p$-spaces}

In this section, we give a \emph{selection} of some further results and applications that were influenced by the article \cite{PrSi04}.

\subsection{General weighted spaces}

More general time-weighted function spaces with power weights have subsequently been considered for instant in \cite{AgrLinVer23}, \cite{Ama19}, \cite{LinMeyVer18}, \cite{MeySchn11}, \cite{MeyVer12} and \cite{MeyVer14}. The authors in \cite{MeyVer12} consider sharp embeddings for vector-valued weighted spaces of Besov, Triebel-Lizorkin, Bessel potential and Sobolev-Slobodeckii type.

In \cite{AgrLinVer23},  \cite{MeySchn11} and \cite{MeyVer14} anisotropic weighted function spaces have been studied. Several sharp embedding results as well as trace theorems have been proven therein. Those (anisotropic) function spaces are of significant importance in the theory of maximal regularity for parabolic boundary value problems as they appear e.g. as certain trace spaces.

Results on interpolation of weighted vector-valued function spaces with boundary conditions can be found in \cite{Ama19} and \cite{LinMeyVer18}. Those interpolation spaces appear for instance in the computation of the trace spaces or critical spaces as soon as boundary conditions come into play.

For additional literature on function spaces we recommend \cite{Ama19} \& \cite{PruSim16} and the references listed therein.

\subsection{Maximal regularity results}

Deterministic results on optimal weighted $L_p$-$L_q$-regularity for parabolic boundary value problems with inhomogeneous boundary data can be found in \cite{GalMey14}, \cite{Mey12}, \cite{MeySchn12} for the case $p=q$ and in \cite{HumLin22}, \cite{Lin20} for the general case $p\neq q$. In \cite{HumLin22}, weights in the spatial variable are considered as well. We also refer to the articles \cite{GalVer17} \& \cite{GalVer17a} concerning maximal regularity results in time-weighted spaces for parabolic operators in $\bbR^n$ having merely measurable coefficients and to the article \cite{ElsMeyReh14} for parabolic boundary value problems in non-smooth domains.

In the probabilistic setting there has been recent progress on stochastic maximal $L_p$-regularity in time-weighted spaces. In this context we want to mention the articles \cite{AgrVer22}, \cite{AgrVer22a} and \cite{PorVer19}. The first two articles can be seen as a stochastic version of \cite{PrSiWi18} concerning critical spaces, while in \cite{PorVer19}, stochastic partial differential equations with VMO coefficients within the stochastic weighted $L_p$-maximal regularity framework are considered. Finally, we want to mention the article \cite{AusNerPor14} for stochastic maximal $L_p$-regularity  results in (weighted) tent spaces. For a comprehensive list of further references in the probabilistic setting, we refer to \cite{AgrVer22} and \cite{PorVer19}.

Note that by Theorem \ref{thm:charactMR}, classical (unweighted) $L_p$-maximal regularity extrapolates to the $L_{p,\mu}$-spaces with power weights having a positive exponent. This has subsequently been extended to all Muckenhoupt weights in \cite{HaaKun07} and in \cite{ChiFio14}, \cite{ChiKro17}, \cite{ChiKro18} to more general weights. For a proof of those extrapolation results in terms of $\mathcal{R}$-boundedness, we refer to \cite{FaHyLi20}.

\subsection{Specific applications of time-weighted spaces}

The aforementioned abstract or general results on weighted $L_p$-maximal regularity have been applied to numerous concrete examples over the last years. Among the vast literature we want to mention the following references. 

Within the dynamics of fluids, weighted $L_p$-spaces and maximal regularity results have for instance been applied to 
\begin{itemize}
\item  the Hibler sea ice model \cite{BrDiHDHi22}, 
\item  a system of PDEs for magnetoviscoelastic fluids \cite{DuShaSim23}, 
\item  incompressible and inhomogeneous fluids (with variable density) \cite{FaQiZh20}, 
\item nematic liquid crystal flows via quasilinear evolution equations \cite{HiNePrSch16},
\item   the phase-field Navier-Stokes equations \cite{Kaj18}, 
\item   the Navier-Stokes equations in unbounded domains with rough initial data \cite{Kun10}, 
\item   rotating rigid bodies with a liquid-filled gap \cite{Maz21}.
\end{itemize}
For the primitive equations of geophysical flows, which might be seen as a suitable approximation of the Navier-Stokes equations, we refer to \cite{BinHie22} \& \cite{GiGrHiHuKa20} where critical spaces have been computed and analyticity of the solutions is proven. 
We further mention the articles \cite{Hie20} \& \cite{HieHus20}
for surveys ranging from boundary layers and fluid structure interaction problems over free boundary value problems and liquid crystal flow to the primitive equations.

For a comprehensive overview of applications of weighted function spaces in the context of free boundary problems as e.g. for the two-phase Navier-Stokes equations or Stefan problems, see the monograph \cite{PruSim16}. Further applications of weighted function spaces include
\begin{itemize}
\item regularity issues for the Cahn-Hilliard equation \cite{FrGaGr21},
\item reaction-diffusion systems of Maxwell-Stefan type \cite{HeMePrWi17},
\item Keller-Segel systems in critical spaces \cite{HiKrSt21},
\item bidomain operators \cite{HiePr20}.
\end{itemize}


\begin{thebibliography}{1}

\bibitem{AgrLinVer23} A. Agresti, N. Lindemulder, M. Veraar, \textit{On the trace embedding and its applications to evolution equations.} Math. Nachr. \textbf{296} (2023), no. 4, 1319--1350.

\bibitem{AgrVer22} A. Agresti, M. Veraar, \textit{Nonlinear parabolic stochastic evolution equations in critical spaces part I. Stochastic maximal regularity and local existence.}  Nonlinearity \textbf{35} (2022), no.8, 4100--4210.

\bibitem{AgrVer22a} A. Agresti, M. Veraar, \textit{Nonlinear parabolic stochastic evolution equations in critical spaces part II: Blow-up criteria and instataneous regularization.}
J. Evol. Equ. \textbf{22} (2022), no.2, Paper No. 56, 96 pp.


\bibitem{Ama95}
H.~Amann.
\newblock {\em Linear and Quasilinear Parabolic Problems: Volume I. Abstract Linear Theory}, volume~89 of
  {\em Monographs in Mathematics}.
\newblock Birkh\"auser Verlag, Basel, (1995).

\bibitem{Ama19}
H.~Amann.
\newblock {\em Linear and Quasilinear Parabolic Problems: Volume II: Function spaces}, volume~106 of
  {\em Monographs in Mathematics}.
\newblock Birkh\"auser Verlag, Basel, (2019).

\bibitem{Ang90} S. B. Angenent,
{\it Nonlinear analytic semiflows}, Proc. Roy. Soc. Edinburgh Sect. A {\bf 115}, no. 1--2, (1990), 91--107.

\bibitem{AusNerPor14} P. Auscher, J. van Neerven, P. Portal,
\emph{Conical stochastic maximal $L_p$-regularity for $1\le p<\infty$.}
Math. Ann. \textbf{359} (2014), no.3--4, 863--889.

\bibitem{BinHie22} T. Binz, M. Hieber,
\emph{Global wellposedness of the primitive equations with nonlinear equation of state in critical spaces.}
J. Math. Fluid Mech. \textbf{24} (2022), no.2, Paper No. 36, 18 pp.

\bibitem{BrDiHDHi22} F. Brandt, K. Disser, R. Haller-Dintelmann, M. Hieber,
\emph{Rigorous analysis and dynamics of Hibler's sea ice model.}
J. Nonlinear Sci. \textbf{32} (2022), no.4, Paper No. 50, 26 pp.


\bibitem{ChiFio14} R. Chill, A. Fiorenza,
\emph{Singular integral operators with operator-valued kernels, and extrapolation of maximal regularity into rearrangement invariant Banach function spaces.} 
J. Evol. Equ. \textbf{14} (2014), no. 4--5, 795--828. 

\bibitem{ChiKro17} R. Chill, S. Kr\'{o}l, 
\emph{Real interpolation with weighted rearrangement invariant Banach function spaces.} 
J. Evol. Equ. \textbf{17} (2017), no. 1, 173--195. 

\bibitem{ChiKro18} R. Chill, S. Kr\'{o}l, 
\emph{Weighted inequalities for singular integral operators on the half-line.} 
Studia Math. \textbf{243} (2018), no. 2, 171--206. 

\bibitem{CleLi93} Ph. Cl\'{e}ment, S. Li,
{\it Abstract parabolic quasilinear equations and application to a groundwater flow problem}, Adv. Math. Sci. Appl. {\bf 3} (1993/94), 17--32.

\bibitem{CleSi01} Ph. Cl\'{e}ment, G. Simonett,
{\it Maximal regularity in continuous interpolation spaces and quasilinear parabolic equations}, J. Evol. Equ. {\bf 1}, no. 1 (2001), 39--67.

\bibitem{DHP1} R. Denk, M. Hieber, J. Pr\"{u}ss,
{\it $\mathcal{R}$-boundedness, Fourier multipliers and problems of elliptic and parabolic type}, Mem. Amer. Math. Soc. {\bf 166}, no. 788, viii+114 (2003).

\bibitem{Dore93}
G. Dore,
\emph{$L^p$-regularity for abstract differential equations.}
 In: Komatsu, H. (eds) Functional Analysis and Related Topics, 1991. Lecture Notes in Mathematics, vol 1540. Springer, Berlin, Heidelberg (1993).

\bibitem{DorVen87}
G. Dore, A. Venni, \emph{On the closedness of the sum of two closed operators.} Math. Z. \textbf{196} (1987), 189--201.

\bibitem{DuShaSim23} H. Du, Y. Shao, G. Simonett, 
\emph{Well-posedness for magnetoviscoelastic fluids in 3D.}
Nonlinear Anal. Real World Appl. \textbf{69} (2023), Paper No. 103759, 14 pp.

\bibitem{ElsMeyReh14} A. F. M. ter Elst, M. Meyries, J. Rehberg,
\emph{Parabolic equations with dynamical boundary conditions and source terms on interfaces.}
Ann. Mat. Pura Appl. (4) \textbf{193} (2014), no.5, 1295--1318.

\bibitem{FaHyLi20} S. Fackler, T. Hyt\"{o}nen, N. Lindemulder, 
\emph{Weighted estimates for operator-valued Fourier multipliers.} 
Collect. Math. \textbf{71} (2020), no. 3, 511--548. 

\bibitem{FaQiZh20} R. Farwig, C. Qian, P. Zhang,
\emph{Incompressible inhomogeneous fluids in bounded domains of $\bbR^3$ with bounded density.}
J. Funct. Anal. \textbf{278} (2020), no.5, 108394, 36 pp.

\bibitem{FrGaGr21} S. Frigeri, C. Gal, M. Grasselli, 
\emph{Regularity results for the nonlocal Cahn-Hilliard equation with singular potential and degenerate mobility.}
J. Differential Equations \textbf{287} (2021), 295--328.

\bibitem{FuKa62}
H.~Fujita and T.~Kato.
\emph{On the non-stationary Navier-Stokes system.}{ Rend.~Sem.~Mat., Univ.~Padova} {\bf 32} (1962), 243--260.

\bibitem{GalMey14} C. Gal, M. Meyries,
\emph{Nonlinear elliptic problems with dynamical boundary conditions of reactive and reactive-diffusive type.}
Proc. Lond. Math. Soc. (3) \textbf{108} (2014), no.6, 1351--1380.

\bibitem{GalVer17} C. Gallarati, M. Veraar, \emph{Evolution families and maximal regularity for systems of parabolic equations.}
Adv. Differential Equations \textbf{22} (2017), no.3--4, 169--190.

\bibitem{GalVer17a} C. Gallarati, M. Veraar, \emph{Maximal regularity for non-autonomous equations with measurable dependence on time.}
Potential Anal. \textbf{46} (2017), no.3, 527--567.

\bibitem{GiGrHiHuKa20} Y. Giga, M. Gries, M. Hieber, A. Hussein, T. Kashiwabara,
\emph{Analyticity of solutions to the primitive equations.}
Math. Nachr. \textbf{293} (2020), no.2, 284--304.

\bibitem{Gri69} P. Grisvard,
{\it \'{E}quations diff\'{e}rentielles abstraites}, Ann. Sci. \'{E}cole Norm. Sup. (4) {\bf 2} (1969), 311--395.

\bibitem{HaaKun07} B. Haak, P. Kunstmann, 
\emph{Weighted admissibility and wellposedness of linear systems in Banach spaces.} 
SIAM J. Control Optim. \textbf{45} (2007), no. 6, 2094--2118. 

\bibitem{HeMePrWi17} M. Herberg, M. Meyries, J. Pr\"{u}ss, M. Wilke,
\emph{Reaction-diffusion systems of Maxwell-Stefan type with reversible mass-action kinetics.}
Nonlinear Anal. \textbf{159} (2017), 264--284.

\bibitem{Hie20} M. Hieber,
\emph{Analysis of viscous fluid flows: an approach by evolution equations.} Mathematical analysis of the Navier-Stokes equations, 1--146.
Lecture Notes in Math., \textbf{2254}
Fond. CIME/CIME Found. Subser.
Springer, Cham, (2020).

\bibitem{HieHus20} M. Hieber, A. Hussein,
\emph{An approach to the primitive equations for oceanic and atmospheric dynamics by evolution equations.} Fluids under pressure, 1--109.
Adv. Math. Fluid Mech.
Birkhäuser/Springer, Cham, (2020).

\bibitem{HiKrSt21} M. Hieber, K. Kress, C. Stinner,
\emph{The Keller-Segel system on bounded convex domains in critical spaces.}
Partial Differ. Equ. Appl. \textbf{2} (2021), no.3, Paper No. 38, 14 pp.

\bibitem{HiNePrSch16} M. Hieber, M. Nesensohn, J. Pr\"{u}ss, K. Schade,
\emph{Dynamics of nematic liquid crystal flows: the quasilinear approach.}
Ann. Inst. H. Poincaré C Anal. Non Linéaire \textbf{33} (2016), no.2, 397--408.

\bibitem{HiePr20} M. Hieber, J. Pr\"{u}ss, 
\emph{Bounded $H^\infty$-calculus for a class of nonlocal operators: the bidomain operator in the $L_q$-setting.}
Math. Ann. \textbf{378} (2020), no.3--4, 1095--1127.


\bibitem{HumLin22} F. Hummel, N. Lindemulder,
\emph{Elliptic and parabolic boundary value problems in weighted function spaces.}
Potential Anal. \textbf{57} (2022), no.4, 601--669.

\bibitem{Kaj18} N. Kajiwara,
\emph{Strong well-posedness for the phase-field Navier-Stokes equations in the maximal regularity class.}
Commun. Math. Sci. \textbf{16} (2018), no.1, 239--250.

\bibitem{KPW10}
M.~K{\"o}hne, J.~Pr{\"u}{ss}, and M.~Wilke, \emph{On quasilinear parabolic evolution equations in weighted {$L_p$}-spaces.}
{ J.~Evol. Equ.} {\bf 10} (2010), 443--463.

\bibitem{Kun10} P. Kunstmann,
\emph{Navier-Stokes equations on unbounded domains with rough initial data.}
Czechoslovak Math. J. \textbf{60} (135) (2010), no.2, 297--313.


\bibitem{LPW14}
J.~LeCrone, J.~Pr{\"u}{ss}, and M.~Wilke, \emph{On quasilinear parabolic evolution equations in weighted {$L_p$}-spaces II.}
{ J.~Evol. Equ.} {\bf 14} (2014), 509--533.

\bibitem{Lin20} N. Lindemulder,
\emph{Maximal regularity with weights for parabolic problems with inhomogeneous boundary conditions.}
J. Evol. Equ. \textbf{20} (2020), no.1, 59--108.

\bibitem{LinMeyVer18} N. Lindemulder, M. Meyries, M. Veraar, \textit{Complex interpolation with Dirichlet boundary conditions on the half line.} Math. Nachr. \textbf{291} (2018), no. 16, 2435--2456.

\bibitem{Maz21} G. Mazzone,
\emph{On the free rotations of rigid bodies with a liquid-filled gap.}
J. Math. Anal. Appl. \textbf{496} (2021), no.2, Paper No. 124826, 37 pp.

\bibitem{Mey12} M. Meyries,
\emph{Global attractors in stronger norms for a class of parabolic systems with nonlinear boundary conditions.}
Nonlinear Anal. \textbf{75} (2012), no.5, 2922--2935.

\bibitem{MeySchn11} M. Meyries, R. Schnaubelt, \textit{Interpolation, embeddings and traces of anisotropic
fractional Sobolev spaces with temporal weights.} J. Funct. Anal.
\textbf{262} (2012), 1200--1229.

\bibitem{MeySchn12} M. Meyries, R. Schnaubelt, \textit{Maximal regularity with temporal weights for parabolic problems with inhomogeneous boundary conditions.} Math. Nachr. \textbf{285} (2012), no.8--9, 1032--1051.

\bibitem{MeyVer12} M. Meyries, M. Veraar, \textit{Sharp embedding results for spaces of smooth functions with power weights.} Studia Math. \textbf{208} (2012), no. 3, 257--293.

\bibitem{MeyVer14} M. Meyries, M. Veraar, \textit{Traces and embeddings of anisotropic function spaces.} Math. Ann. \textbf{360} (2014), no. 3--4, 571--606.

\bibitem{NiZa22} L. Niebel, R. Zacher,
\emph{Kinetic maximal $L_p$-regularity with temporal weights and application to quasilinear kinetic diffusion equations.}
J. Differential Equations \textbf{307} (2022), 29--82.


\bibitem{PorVer19} P. Portal, M. Veraar,
\emph{Stochastic maximal regularity for rough time-dependent problems.}
Stoch. Partial Differ. Equ. Anal. Comput. \textbf{7} (2019), no.4, 541--597.

\bibitem{JanBari} J. Pr\"{u}ss,
{\it Maximal regularity for evolution equations in $L_p$-spaces}, Conf. Semin. Mat. Univ. Bari {\bf 285} (2002), 1--39.

\bibitem{PrSi04} J. Pr\"{u}ss, G. Simonett, \textit{Maximal regularity for evolution equations in weighted $L_p$-spaces.} Arch. Math.
\textbf{82} (2004), 415--431.

\bibitem{PruSim16}
J. Pr\"uss, G. Simonett,
\emph{Moving Interfaces and Quasilinear Parabolic Evolution Equations.}
Monographs in Mathematics. Birkh\"auser Verlag (2016).

\bibitem{PrSiWi18}
J. Pr\"uss, G. Simonett,  M. Wilke,
\emph{Critical spaces for quasilinear parabolic evolution equations and applications}.
J. Differential Equations {\bf 264}, no. 3 (2018), 2028--2074.

\bibitem{PrSo90} J. Pr\"{u}ss, H. Sohr,
{\it  On Operators with Bounded Imaginary Powers in Banach Spaces. } Math. Z. {\bf 203} (1990), 429--452.

\bibitem{PrWi17}
J. Pr\"uss,    M. Wilke,
\emph{Addendum to the paper ``On quasilinear parabolic evolution equations in weighted $L_p$-spaces II''}.
J. Evol. Equ. {\bf 17}, no. 4 (2017), 1381--1388.

\bibitem{PrWi18}
J. Pr\"uss,    M. Wilke,
\emph{On critical spaces for the Navier-Stokes equations}.
J. Math. Fluid Mech. {\bf 20}, no. 2 (2018), 733--755.


\bibitem{SiWi22}
G. Simonett,  M. Wilke,
\emph{$H^\infty$-calculus for the surface Stokes operator and applications}.
 J. Math. Fluid Mech. 24, no. 4, Paper No. 109 (2022).

\bibitem{Sob64}
P. E. Sobolevskii,
\emph{Coerciveness inequalities for abstract parabolic equations.}
Soviet Math. Dokl. \textbf{5} (1964), 894--897.

\bibitem{Weis01}
L. Weis,
\emph{Operator-valued Fourier multiplier theorems and maximal $L_p$-regularity.} Math. Ann. \textbf{319} (4) (2001), 735--758.


\end{thebibliography}
\end{document}